\newtheorem{theorem}{Theorem}
\newtheorem{lemma}[theorem]{Lemma}
\newtheorem{corollary}[theorem]{Corollary}
\theoremstyle{definition}
\newtheorem{definition}[theorem]{Definition}
\newtheorem{remark}[theorem]{Remark}
\author{Pablo Romero\footnote{Facultad de Ingenier\'ia, Universidad de la Rep\'ublica, Montevideo, Uruguay. E-mail address: \texttt{promero@fing.edu.uy}.\\ Departamento de Matem\'atica, Facultad de Ciencias Exactas y Naturales, Universidad de Buenos Aires, Argentina. 
}\qquad Louis Petingi\footnote{College of Staten Island, City University of New York, USA. E-mail address: \texttt{louis.petingi@csi.cuny.edu}}}
\date{}
\begin{document}

\title{Construction of infinitely many trace-minimal graphs with maximum number of spanning trees}

\maketitle

\begin{abstract}\let\thefootnote\relax
A longstanding problem in spectral graph theory asks for graphs with maximum number of spanning trees among all connected simple graphs with a prescribed number of vertices and edges. Such graphs are called $t$-optimal graphs. Petingi and Rodr\'iguez [Discrete Math. 244 (2002), 351--373] achieved in finding infinitely many $t$-optimal graphs. Basically, they reduced the problem of finding $t$-optimal graphs to the determination of almost-regular graphs with minimum number of induced $3$-paths. 

In this work we revisit the construction of $t$-optimal graphs given by Petingi and Rodríguez. Then, we 
generalize the previous construction using the key concept of trace-minimal graph introduced by \'Abrego et al. [Linear Algebra Appl. 412 (2006) 161--221]. Finally, as a consequence, we construct infinitely many new $t$-optimal regular graphs.
\end{abstract}

\renewcommand{\labelitemi}{--}
\section{Introduction} \label{intro}
In the half of the nineteenth century Kirchhoff, interested in the solution of linear resistive circuits, 
proved that the number of spanning trees of any graph equals each of the cofactors of its Laplacian matrix~\cite{Biggs}. 
As a consequence, the number of spanning trees of a graph, sometimes called the \emph{tree-number of a graph}, can be found efficiently. Nevertheless, if we are given integers $n$ and $m$ a question that arises is how to construct simple graphs with $n$ vertices and $m$ edges maximizing the tree-number. 
Such graphs, called \emph{$t$-optimal graphs}, are essential in network reliability analysis~\cite{Romero}.

Cheng~\cite{Cheng} proved that each complete multipartite graph is $t$-optimal. Later, Petingi and Rodríguez~\cite{Petingi} generalized Cheng theorem by proving that each almost-regular complete multipartite graph is $t$-optimal. They developed a methodology to construct $t$-optimal graphs which basically reduces the problem of finding $t$-optimal graphs to the determination of almost-regular graphs with minimum number of induced $3$-paths. As a consequence, they achieved in finding infinitely many $t$-optimal graphs on $n$ vertices and $m$ edges when $\binom{n}{2}-3n/2\leq m \leq \binom{n}{2}$. Additional $t$-optimal graphs were only determined for graph classes having reduced corank. In fact, Kahl and Luttrell~\cite{Kahl} introduced the concept of Tutte-maximum graphs and proved that each Tutte-maximum graph is not only $t$-optimal but also maximizes simultaneously several graph invariants. Then, they determined infinitely many Tutte-maximum graphs with reduced corank (the reader is invited to consult~\cite{Kahl} for details).

Surprisingly, an article that was published two decades ago whose goal was completely different gives a powerful ingredient to the study of $t$-optimal graphs. \'Abrego et al.~\cite{Abrego} wanted to find D-optimal weighting designs (basically, how to estimate the weight of some objects with minimum uncertainty). Using a novel concept of \emph{trace-minimal graphs} they managed to find new D-optimal weighting designs. 

The purpose of this article is to show that the concept of trace-minimal graphs can be used to enhance the methodology presented by Petingi and Rodríguez in~\cite{Petingi}. As a consequence we will find novel $t$-optimal regular graphs that are trace-minimal as well. The article is organized as follows. Section~\ref{section:concepts} presents general concepts on graph theory. Section~\ref{section:background} presents key concepts and statements from the works of Petingi and Rodríguez~\cite{Petingi} as well as \'Abrego et al.~\cite{Abrego}. An enhanced methodology to find $t$-optimal graphs based on those works is introduced in Section~\ref{section:main}. As a consequence, 
novel $t$-optimal graphs are given in Section~\ref{section:toptimal}. 

\section{Concepts}\label{section:concepts}
Let $\mathcal{S}_{n,m}$ be the class of simple graphs on $n$ vertices and $m$ edges. Let $G$ be any graph in $\mathcal{S}_{n,m}$. 
We denote its vertex and edge set by $V(G)$ and $E(G)$, respectively. Let $V(G)=\{v_1,v_2,\ldots,v_n\}$. 
Two vertices $v_i$ and $v_j$ in $G$ are \emph{adjacent} when 
$v_iv_j \in E(G)$; in such case the edge $v_iv_j$ is \emph{incident} at both vertices $v_i$ and $v_j$. The \emph{degree} of a vertex $v_i$ in $G$, denoted $d_i$, is the number of edges incident at $v_i$. 
The \emph{degree sequence} $d(G)$ of $G$ is $(d_1,d_2,\ldots,d_n)$. We say $G$ is \emph{$d$-regular} if $d_i=d$ for each $i\in \{1,2,\ldots,n\}$. Let $\mathcal{R}_d(n)$ be the class of $d$-regular graphs on $n$ vertices. We say $G$ is \emph{almost-regular} if the degrees of any two of its vertices differ in at most  $1$. The class of almost-regular graphs in $\mathcal{S}_{n,m}$ is denoted $\mathcal{A}_{n,m}$. The identity matrix and the all-ones matrix of size $n \times n$ are denoted $I_n$ and 
$J_n$, respectively. The all-ones vector of size $n$ is denoted $\mathbf{1}_n$. 
The \emph{trace} of a matrix $M$, denoted $tr(M)$, is the sum of the elements in its diagonal. The \emph{adjacency matrix of $G$} is the matrix $A(G)=(a_{i,j})_{1\leq i,j\leq n}$ such that $a_{i,j}=1$ if $v_iv_j \in E(G)$ or $a_{i,j}=0$ otherwise. Let $D(G)$ the diagonal matrix whose diagonal is $d(G)$. 
The \emph{Laplacian matrix of $G$} is the matrix $L(G)$ given by $D(G)-A(G)$. Let $P_G(x)$ be the characteristic polynomial of $L(G)$. The \emph{Laplacian spectrum of $G$} is the multiset consisting of all roots of $P_G(x)$. 

For each $S$ in $V(G)$, the \emph{subgraph of $G$ induced by $S$} arises from $G$ by the deletion of each vertex not in $S$. Let $t(G)$, $\tau(G)$, and $\nu(G)$ be the number of spanning trees, induced triangles, and induced $3$-paths in $G$, respectively. The graph $G$ is \emph{$t$-optimal} if $t(G)\geq t(H)$ for each $H$ in $\mathcal{S}_{n,m}$. 
The \emph{girth} of $G$ is the minimum number of vertices in a cycle of $G$ (it is infinite if $G$ has no cycles). For each positive integer $i$, we denote $cyc(G,i)$ the number of cycles in $G$ having precisely $i$ vertices. If $G$ is in $\mathcal{A}_{n,m}$, then it is \emph{$\nu$-min} if $\nu(G)\leq \nu(H)$ for each $H$ in $\mathcal{A}_{n,m}$. The \emph{complement of $G$} is denoted $\overline{G}$. The union of two disjoint graphs $G$ and $H$ is denoted $G \cup H$. The \emph{join} of two graphs $G$ and $H$, denoted $G \wedge H$, is the graph $\overline{\overline{G} \cup \overline{H}}$. We denote $G^{(n)}$ the join of $n$ disjoint copies of $G$. The number of vertices of $G$ is denoted $n(G)$. Denote $\mathcal{S}(G)$, $\mathcal{A}(G)$, and 
$\mathcal{R}(G)$ the classes of simple, almost-regular, or regular graphs with precisely as many vertices and edges as $G$, respectively. The $n$-path, the $n$-cycle, and the $n$-complete graph are denoted by $P_n$, $C_n$, and $K_n$, respectively. 

\section{Related work}\label{section:background}
In this section we will first revisit the methodology developed by Petingi and Rodríguez~\cite{Petingi} to construct $t$-optimal graphs. Then, we will present the concept of trace-minimal graphs introduced by \'Abrego et al.~\cite{Abrego}. Finally, we will list some results on trace-minimal graphs which will be useful for our purpose.\\

The number of spanning trees of a graph $G$ is determined by its Laplacian spectrum. 
\begin{lemma}[Biggs~\cite{Biggs}]\label{lemma:t}
If $G$ is a simple graph on $n$ vertices then $t(G)=n^{-2}P_{\overline{G}}(n)$.    
\end{lemma}
Let $G$ be any simple graph on $n$ vertices. Denote $P_G(x)=\prod_{i=1}^{n}(x-\lambda_i)$, where $\lambda_1,\lambda_2,\ldots,\lambda_n$ are the Laplacian eigenvalues of $G$. 
If $\overline{G}$ is connected then it is simple to prove that each of the Laplacian eigenvalues of $G$ lie in the interval $[0,n)$ and the function $P_G(x)/x^n$ is positive for all $x\geq n$. Additionally, 
\begin{align}
-\log\left(\frac{P_G(x)}{x^n}\right) &= -\log\left(\prod_{i=1}^{n}(1-\frac{\lambda_i}{x})\right) = \sum_{i=1}^{n}-\log\left(1-\frac{\lambda_i}{x}\right)  = \sum_{i=1}^{n}\sum_{k=1}^{\infty}\frac{\lambda_i^k}{kx^k} \notag\\ 
&= \sum_{k=1}^{\infty}\frac{\sum_{i=1}^n \lambda_i^k}{kx^k} = \sum_{k=1}^{\infty}\frac{tr(L(G)^k)}{kx^k} = \sum_{k=1}^{\infty}\frac{\ell_k(G)}{kx^k}, \label{eq:nodrop}
\end{align}
where the power series expansion $-\log(1-x)=\sum_{k=1}^{\infty}\frac{x^k}{k}$ was used which is valid whenever $|x|<1$, and $\ell_k(G)=tr(L(G)^k)$ is the \emph{Laplacian sequence} of $G$. Petingi and Rodríguez found lower bounds for $\ell_k(G)$. 
\begin{lemma}[Petingi and Rodríguez~\cite{Petingi}]\label{lemma:gaps}
If $G$ is a simple graph with degree sequence $(d_1,d_2,\ldots,d_n)$ and $k$ is any positive integer then  
$\ell_k(G) \geq \sum_{i=1}^{n}d_i(d_i+1)^{k-1}$. Additionally, the equality holds for each positive integer $k$ if and only if $G$ is a disjoint union of complete graphs.
\end{lemma}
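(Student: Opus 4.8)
The plan is to work directly with the combinatorial meaning of $\ell_k(G)=tr(L(G)^k)=\sum_{i=1}^n (L(G)^k)_{i,i}$. Expanding the matrix product, $(L(G)^k)_{i,i}$ is a signed weighted count of closed walks $W=(w_0,w_1,\ldots,w_k)$ of length $k$ with $w_0=w_k=v_i$, where at each step we either \emph{stay} at the current vertex (contributing its degree as a factor, since $L(G)_{i,i}=d_i$) or \emph{traverse} an edge (contributing the factor $-1$ coming from $L(G)_{i,j}=-a_{i,j}$). Writing $e(W)$ for the number of edge-traversals of $W$, its weight is $w(W)=(-1)^{e(W)}\prod_{\text{stays}} d_{(\cdot)}$. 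Since the target equals $tr\big(D(G)(D(G)+I_n)^{k-1}\big)=\sum_{i=1}^n d_i(d_i+1)^{k-1}$, it suffices to prove the per-vertex inequality $(L(G)^k)_{i,i}\ge d_i(d_i+1)^{k-1}$ and then sum over $i$.

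For the per-vertex bound I would first recognize that $d_i(d_i+1)^{k-1}$ is exactly the value of $(L(K_{d_i+1})^k)_{i,i}$, so the claim is that each vertex performs at least as well as it would inside a complete graph on its closed neighbourhood. A convenient way to package this is through the per-vertex generating function $R_i(x)=\sum_{k\ge 0}(L(G)^k)_{i,i}\,x^k=[(I_n-xL(G))^{-1}]_{i,i}$: the target sequence $\big(d_i(d_i+1)^{k-1}\big)_{k\ge 1}$ has generating function $\tfrac{1-x}{1-(d_i+1)x}$, so the assertion becomes that $R_i(x)-\tfrac{1-x}{1-(d_i+1)x}$ has non-negative power-series coefficients. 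I would attack this by induction on $k$ through the recursion $(L(G)^k)_{i,i}=d_i\,(L(G)^{k-1})_{i,i}-\sum_{v_iv_j\in E(G)}(L(G)^{k-1})_{i,j}$, the goal being to show that the defect $(L(G)^k)_{i,i}-d_i(d_i+1)^{k-1}$ stays non-negative at every level; the alternative is a sign-reversing involution isolating a positive sub-family of walks that sums to the main term $d_i(d_i+1)^{k-1}$ without cancellation.

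The equality analysis is the part that connects most directly to the rest of the paper, and I would reduce the ``for every $k$'' statement to the single case $k=3$. On the one hand, if $G$ is a disjoint union of complete graphs then $L(G)$ is block diagonal with blocks $L(K_{n_j})$, and since each vertex of $K_{n_j}$ satisfies $(L(K_{n_j})^k)_{i,i}=(n_j-1)n_j^{k-1}=d_i(d_i+1)^{k-1}$, equality holds for every $k$. On the other hand, a direct expansion gives $(L(G)^3)_{i,i}=d_i^3+2d_i^2+\sum_{v_iv_j\in E(G)}d_j-2e_i$, where $e_i$ denotes the number of edges joining two neighbours of $v_i$; subtracting $d_i(d_i+1)^2=d_i^3+2d_i^2+d_i$ leaves $\sum_{v_iv_j\in E(G)}(d_j-1)-2e_i$, which is exactly the number of induced $3$-paths having $v_i$ as an endpoint. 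Summing over $i$ yields $\ell_3(G)=\sum_{i=1}^n d_i(d_i+1)^2+2\nu(G)$. Hence equality at $k=3$ is equivalent to $\nu(G)=0$, i.e. to $G$ having no induced $P_3$, which is the classical characterization of disjoint unions of complete graphs; and this single case already forces equality for all $k$.

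The main obstacle is controlling the signs in the general per-vertex bound. Negative weights arise precisely from closed walks traversing an odd number of edges (for instance a single loop around a triangle contributes $(-1)^3$), so the inequality is not a termwise domination and genuinely requires that these negative contributions be outweighed. Concretely, the inductive step asks to prove $\sum_{v_iv_j\in E(G)}(L(G)^{k-1})_{i,j}\le d_i\big((L(G)^{k-1})_{i,i}-(d_i+1)^{k-1}\big)$, whose right-hand side is typically negative; this forces genuine cancellation among the off-diagonal entries and cannot follow from positivity alone. Making this quantitative for all $k$ — rather than just the small cases computed above, where it already reproduces the induced-$P_3$ count — is where the real work lies.
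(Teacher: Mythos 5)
First, a framing remark: the paper does not prove this lemma at all --- it is imported verbatim from Petingi and Rodr\'iguez --- so there is no in-paper proof to compare against and your argument has to stand on its own. Much of it does. The reduction to the per-vertex inequality $(L(G)^k)_{i,i}\ge d_i(d_i+1)^{k-1}$ is legitimate (and that per-vertex statement is in fact true); the expansion $(L(G)^3)_{i,i}=d_i^3+2d_i^2+\sum_{v_iv_j\in E(G)}d_j-2e_i$ and the resulting identity $\ell_3(G)=\sum_i d_i(d_i+1)^2+2\nu(G)$ are correct; and the equality analysis is complete as stated: equality for every $k$ forces equality at $k=3$, hence $\nu(G)=0$, hence $G$ is $P_3$-free and therefore a disjoint union of cliques, while conversely $L(K_{r})^k=r^kI_r-r^{k-1}J_r$ gives $(L(K_r)^k)_{i,i}=(r-1)r^{k-1}=d_i(d_i+1)^{k-1}$ termwise.

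The genuine gap is the inequality itself for $k\ge 4$, which is the entire content of the lemma and which you explicitly leave open (``where the real work lies''). Neither proposed route is carried out: the sign-reversing involution is only named, and the inductive step you isolate, $\sum_{v_iv_j\in E(G)}(L^{k-1})_{i,j}\le d_i\bigl((L^{k-1})_{i,i}-(d_i+1)^{k-1}\bigr)$, does not follow from the inductive hypothesis --- the right-hand side can be as negative as about $-d_i(d_i+1)^{k-2}$, and bounding the left-hand side requires structural control of off-diagonal entries of $L^{k-1}$ that you do not supply. A clean way to close the gap while keeping your per-vertex reduction is spectral rather than combinatorial: writing $u_1,\dots,u_n$ for an orthonormal basis of Laplacian eigenvectors, let $\mu_i$ be the probability measure on $[0,\infty)$ assigning mass $(u_j)_i^2$ to $\lambda_j$, so that $(L^k)_{i,i}=\int x^k\,d\mu_i$ with $\int 1\,d\mu_i=1$, $\int x\,d\mu_i=d_i$, $\int x^2\,d\mu_i=d_i^2+d_i$. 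The two-point measure $\nu_i=\tfrac{d_i}{d_i+1}\delta_{d_i+1}+\tfrac{1}{d_i+1}\delta_{0}$ has the same three moments and $\int x^k\,d\nu_i=d_i(d_i+1)^{k-1}$. If $q$ is the quadratic Hermite interpolant of $x^k$ at the nodes $0,\,d_i+1,\,d_i+1$, then $\int q\,d\mu_i=\int q\,d\nu_i=\int x^k\,d\nu_i$, and the interpolation error $x^k-q(x)=\tfrac{1}{6}k(k-1)(k-2)\xi^{k-3}\,x\,(x-(d_i+1))^2$ is nonnegative on $[0,\infty)$ for $k\ge 3$, whence $(L^k)_{i,i}-d_i(d_i+1)^{k-1}=\int (x^k-q)\,d\mu_i\ge 0$. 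Without some such argument, the proposal establishes the lemma only for $k\le 3$.
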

The \emph{gap sequence} of a graph $G$ with degree sequence $(d_1,d_2,\ldots,d_n)$ 
is $g_k(G)=\ell_k(G)-\sum_{i=1}^{n}d_i(d_i+1)^{k-1}$. Petingi and Rodríguez observed that $g_1(G)=g_2(G)=0$ and $g_3(G)=2\nu(G)$. Consequently, for all $x\geq n$,
\begin{align}\label{ineq:chain}
-\log\left(\frac{P_G(x)}{x^n}\right) &= \sum_{k=1}^{\infty}\frac{\ell_k(G)}{kx^k} \geq 
\sum_{k=1}^{\infty}\frac{\sum_{i=1}^{n}d_i(d_i+1)^{k-1}}{kx^k} + \frac{2\nu(G)}{3x^3}= \sum_{i=1}^{n}\frac{d_i}{d_i+1}\sum_{k=1}^{\infty}\frac{(d_i+1)^k}{kx^k} + \frac{2\nu(G)}{3x^3}\\
&= -\sum_{i=1}^{n}\frac{d_i}{d_i+1}\log\left(1- \frac{d_i+1}{x}\right)+ \frac{2\nu(G)}{3x^3} = -\log\left(e^{-\frac{2\nu(G)}{3x^3}}\prod_{i=1}^{n}\left(1-\frac{d_i+1}{x} \right)^{\frac{d_i}{d_i+1}} \right). \notag
\end{align}
Solving for $P_G(x)$ yields
\begin{equation}\label{eq:poly}
P_G(x) \leq x^n e^{-\frac{2\nu(G)}{3x^3}}\prod_{i=1}^{n}\left( 1- \frac{d_i+1}{x}\right)^{\frac{d_i}{d_i+1}}.    
\end{equation}
The authors obtained the following result replacing~\eqref{eq:poly} into the expression for $t(\overline{G})$ given in Lemma~\ref{lemma:t}. 
\begin{lemma}[Petingi and Rodríguez~\cite{Petingi}]\label{lemma:PetingiRodriguez}
If $G$ is a graph with degree sequence $d_1,\ldots,d_n$ such that $\overline{G}$ is connected then  
\begin{equation}\label{eq:bound1}
t(\overline{G}) \leq n^{n-2}e^{-\frac{2\nu(G)}{3n^3}}\prod_{i=1}^{n}\left( 1- \frac{d_i+1}{n}\right)^{\frac{d_i}{d_i+1}}.
\end{equation}
The equality occurs if and only if $G$ is a union of complete graphs.
\end{lemma}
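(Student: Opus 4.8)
The plan is to read off the bound directly from the two ingredients already in place, namely the inequality~\eqref{eq:poly} and Biggs' formula (Lemma~\ref{lemma:t}). First I would apply Lemma~\ref{lemma:t} with $\overline{G}$ in the role of $G$; since $\overline{\overline{G}}=G$ this gives $t(\overline{G})=n^{-2}P_G(n)$. The hypothesis that $\overline{G}$ is connected is exactly what makes the derivation leading to~\eqref{eq:poly} legitimate at the point $x=n$: the Laplacian eigenvalues of $G$ then lie in $[0,n)$, the series in~\eqref{eq:nodrop} converges at $x=n$, and $P_G(x)/x^n$ is positive there. Evaluating~\eqref{eq:poly} at $x=n$ and dividing by $n^2$ yields~\eqref{eq:bound1} at once, so the inequality is a one-line substitution.

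For the equality statement I would track the unique inequality used in passing from~\eqref{eq:nodrop} to~\eqref{ineq:chain}. With $g_k(G)=\ell_k(G)-\sum_{i=1}^n d_i(d_i+1)^{k-1}$, the two sides of~\eqref{ineq:chain} differ by precisely $\sum_{k\geq 4} g_k(G)/(kx^k)$, because $g_1=g_2=0$ while the $k=3$ contribution $g_3=2\nu(G)$ is kept on the right-hand side. By Lemma~\ref{lemma:gaps} every $g_k(G)\geq 0$, so at $x=n$ equality holds in~\eqref{eq:bound1} if and only if this nonnegative tail vanishes, that is, if and only if $g_k(G)=0$ for every $k\geq 4$. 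One direction of the desired characterization is then immediate: if $G$ is a disjoint union of complete graphs, Lemma~\ref{lemma:gaps} gives $g_k(G)=0$ for all $k$, hence equality.

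The converse is the heart of the matter. I would use the elementary fact that $\nu(G)=0$ if and only if $G$ is a disjoint union of complete graphs, since a connected graph of diameter at least two contains two vertices at distance two and hence an induced $P_3$. It therefore suffices to prove that $g_4(G)=0$ forces $\nu(G)=0$, equivalently that a single induced $P_3$ makes $g_4(G)>0$. The main obstacle is exactly this step. My approach would be to expand $tr(L(G)^4)$ through the closed-walk reading of $L=D-A$, in which staying at a vertex contributes its degree and traversing an edge contributes $-1$, collect the sixteen words of $(D-A)^4$ into standard graph invariants (sums of degree powers, $\sum_{ij\in E}d_id_j$, weighted triangle counts, and squared counts of common neighbours), and subtract $\sum_{i=1}^n d_i(d_i+1)^3$ to obtain an explicit formula for $g_4(G)$. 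The target is to rewrite that formula as a combination from which $g_4(G)>0$ is visible whenever an induced $P_3$ is present; a concrete sanity check is $g_4(P_3)=12$. The delicate point, and where I expect the real work to concentrate, is that the degree- and triangle-dependent terms in the expansion are not independent of $\nu(G)$, so the regrouping into manifestly nonnegative pieces that isolate the induced-$P_3$ contribution must be carried out with care.
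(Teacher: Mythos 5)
Your derivation of the inequality is exactly the paper's: the paper obtains \eqref{eq:bound1} by substituting \eqref{eq:poly} at $x=n$ into $t(\overline{G})=n^{-2}P_G(n)$ from Lemma~\ref{lemma:t}, and your justification for evaluating at $x=n$ (the Laplacian eigenvalues of $G$ lie in $[0,n)$ when $\overline{G}$ is connected, so the series converge there) is the same as in the text. Your bookkeeping for the equality case is also correct: since $g_1=g_2=0$, the term $g_3=2\nu(G)$ is retained on the right-hand side, and $g_k(G)\geq 0$ for all $k$ by Lemma~\ref{lemma:gaps}, so equality in \eqref{eq:bound1} holds if and only if $g_k(G)=0$ for every $k\geq 4$; the ``if'' direction then follows from the equality clause of Lemma~\ref{lemma:gaps}.

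The gap is in the converse, and you have located it yourself: everything reduces to showing that $g_k(G)=0$ for all $k\geq 4$ forces $G$ to be a union of complete graphs, and your plan --- expand $tr(L(G)^4)$ into signed walk counts and regroup so that an induced $P_3$ visibly contributes --- is announced but not carried out. As written this is not a proof: the claim $g_4(G)=0\Rightarrow\nu(G)=0$ is true, but it carries essentially the same difficulty as the equality clause of Lemma~\ref{lemma:gaps} itself, and the sanity check $g_4(P_3)=12$ says nothing about an induced $P_3$ sitting inside a larger graph, which is where your ``delicate regrouping'' would have to do its work. To be fair, the paper does not prove the equality clause either (Lemma~\ref{lemma:PetingiRodriguez} is imported wholesale from~\cite{Petingi}), so you are attempting more than the text does. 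If you do want to close the gap, it is easier to use all of the conditions $g_k(G)=0$ for $k\geq 4$ at once rather than only $k=4$: the identity $\sum_{i=1}^n\lambda_i^k=\sum_{i=1}^n\frac{d_i}{d_i+1}(d_i+1)^k$ holding for infinitely many $k$ forces these two finite nonnegative exponential sums to coincide term by term (compare the largest bases, divide, take limits, and peel off), hence the identity also holds at $k=3$, giving $g_3(G)=2\nu(G)=0$ and therefore that $G$ is a disjoint union of complete graphs. That route avoids the expansion of $tr(L(G)^4)$ entirely.
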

Define the function $f(d,x)$ as the product operator that appears on the right-hand side of equation~\eqref{eq:poly},
\begin{equation}\label{eq:f}
f(d,x) = \prod_{i=1}^{n}\left( 1- \frac{d_i+1}{x}\right)^{\frac{d_i}{d_i+1}}.   
\end{equation}
The authors proved that $f(d,x)$ is maximized among simple graphs $G$ in $\mathcal{S}_{n,m}$ when $G$ is almost-regular. 
\begin{lemma}[Petingi and Rodríguez~\cite{Petingi}]\label{lemma:f}
If $G$ is in $\mathcal{S}_{n,m}-\mathcal{A}_{n,m}$ and $H$ is in 
$\mathcal{A}_{n,m}$ then $f(d(G),x)< f(d(H),x)$ for all $x\geq n$.
\end{lemma}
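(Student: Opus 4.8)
The plan is to take logarithms and reduce the claim to a convexity/smoothing statement about degree sequences. Writing $d=d(G)=(d_1,\dots,d_n)$ and using \eqref{eq:f}, we have $\log f(d,x)=\sum_{i=1}^n \frac{d_i}{d_i+1}\log\!\left(1-\frac{d_i+1}{x}\right)$. For $x>n$ each argument of the logarithm lies in $(0,1)$, so I would expand each term with the same power series used in \eqref{eq:nodrop}, namely $\log\!\left(1-\frac{d_i+1}{x}\right)=-\sum_{k\ge 1}\frac{(d_i+1)^k}{kx^k}$, and interchange the (absolutely convergent) sums to obtain the clean identity
\[
\log f(d,x)=-\sum_{k=1}^{\infty}\frac{1}{kx^k}\sum_{i=1}^{n}d_i(d_i+1)^{k-1},
\]
which is precisely the lower-bound term appearing in \eqref{ineq:chain}. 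Since $\sum_i d_i=2m$ is fixed on $\mathcal{S}_{n,m}$ and the coefficients $1/(kx^k)$ are positive, it suffices to show that the almost-regular (balanced) degree sequence minimizes $\sum_{i=1}^n\psi_k(d_i)$ for every $k$, where $\psi_k(d)=d(d+1)^{k-1}$, with a strict improvement coming from $k=2$.

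Next I would establish that each $\psi_k$ is convex, and strictly convex for $k\ge 2$. In the variable $v=d+1$ one has $\psi_k=v^k-v^{k-1}$, and a direct differentiation gives $\psi_k''=(k-1)v^{k-3}\bigl(kv-(k-2)\bigr)$, which vanishes for $k=1$ and is strictly positive for $k\ge 2$ and $v\ge 1$. Consequently $\sum_i\psi_k(d_i)$ is a symmetric, Schur-convex function of the degree sequence, minimized at the most balanced sequence, and the $k=2$ term strictly penalizes any imbalance.

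The operative step is then a smoothing transfer. If $G\notin\mathcal{A}_{n,m}$, some pair of vertices satisfies $d_i\ge d_j+2$; replacing $d$ by the sequence $d'$ with $d_i'=d_i-1$, $d_j'=d_j+1$ and all other entries unchanged preserves the sum $2m$, leaves $\sum_i\psi_1(d_i)$ fixed, does not increase any $\sum_i\psi_k(d_i)$ (convexity, since $d_i-1\ge d_j+1$ brings the two degrees closer), and strictly decreases $\sum_i\psi_2(d_i)$ (strict convexity of $\psi_2$). By the series identity this yields $\log f(d',x)>\log f(d,x)$. Iterating the transfer until no two degrees differ by more than $1$ terminates at the balanced sequence, which is exactly $d(H)$ for every $H\in\mathcal{A}_{n,m}$; since at least one strict step occurs, $\log f(d(H),x)>\log f(d(G),x)$, i.e. $f(d(G),x)<f(d(H),x)$.

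The routine loose ends are the interchange of summations and absolute convergence, immediate for $x>n$ because $(d_i+1)/x<1$. The main point requiring care is the boundary $x=n$, where a factor of $f$ vanishes exactly when a vertex has degree $n-1$. When $G$ has maximum degree at most $n-2$ — the relevant case in which $\overline{G}$ is connected, where one checks that $H$, having the same edge count, also has maximum degree at most $n-2$ — all factors are positive, the series still converge, and the argument above goes through verbatim to give the strict inequality; the remaining degenerate configurations can be handled directly.
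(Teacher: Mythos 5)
The paper does not prove Lemma~\ref{lemma:f} --- it is imported verbatim from Petingi and Rodr\'iguez~\cite{Petingi} --- so there is no internal proof to compare against; judged on its own, your argument is essentially correct and is the natural one: the identity $\log f(d,x)=-\sum_{k\ge1}\frac{1}{kx^k}\sum_i d_i(d_i+1)^{k-1}$ is exactly the middle expression of~\eqref{ineq:chain}, the convexity computation for $\psi_k(d)=d(d+1)^{k-1}$ is right ($\psi_k''=(k-1)v^{k-3}(kv-(k-2))>0$ for $k\ge2$, $v\ge1$), the transfer $d_i\mapsto d_i-1$, $d_j\mapsto d_j+1$ with $d_i\ge d_j+2$ never increases any coefficient and strictly decreases the $k=2$ one (it lowers $\sum_i d_i^2$ by at least $2$), and since all graphs in $\mathcal{A}_{n,m}$ share one degree multiset, iterating terminates at $d(H)$ with at least one strict step. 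Note also that the intermediate sequences need not be graphical, which is harmless since $f$ depends only on the sequence.

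The one place where you are too quick is the boundary $x=n$, which you dismiss with ``the remaining degenerate configurations can be handled directly.'' They cannot always be: if some $H\in\mathcal{A}_{n,m}$ has a vertex of degree $n-1$, then all its degrees lie in $\{n-2,n-1\}$, so $\binom{n}{2}-m<n/2$; consequently the complement of \emph{every} graph in $\mathcal{S}_{n,m}$ has an isolated vertex, every $G$ also has a vertex of degree $n-1$, and $f(d(G),n)=f(d(H),n)=0$, so the strict inequality fails at $x=n$ whenever a non-almost-regular graph exists in that range (e.g.\ $n=10$, $m=41$, $G=\overline{K_{1,4}\cup 5K_1}$). This is really a defect of the statement as transcribed here (in~\cite{Petingi} the inequality is used in a context where $\overline{G}$ is connected, which forces maximum degree at most $n-2$ and restores convergence and positivity at $x=n$), but you should either impose that hypothesis explicitly or restrict the strict claim to $x>n$ rather than assert the degenerate cases are routine.
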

A longstanding conjecture proposed by Boesch~\cite{Boesch} states that each $t$-optimal graph is almost-regular. Petingi and Rodríguez proved that Boesch conjecture holds in an asymptotic sense which is precisely stated in Theorem~\ref{theorem:PetingiRodriguez-almostregular}. The following notation will be used throughout this article. For each almost-regular graph $G_0$ all of whose vertices have degree $d-1$ or $d$ and each pair of nonnegative integers $p$ and $q$, we let 
$G_0(p,q)$ be the graph $G_0 \cup pK_{d+1} \cup qK_d$. Note that  $G_0(p,q)$ and $\overline{G_0(p,q)}$ are almost-regular.
\begin{theorem}[Petingi and Rodríguez~\cite{Petingi}]\label{theorem:PetingiRodriguez-almostregular}
Let $G_0$ be an almost-regular graph all of whose vertices have degree $d-1$ or $d$. Then there exists a positive integer $n_0$ such that each graph $H$ in $\mathcal{S}(G_0(p,q))$ with at least $n_0$ vertices that is not almost-regular satisfies that $t(\overline{H})<t(\overline{G_0(p,q)})$.
\end{theorem}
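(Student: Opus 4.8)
The plan is to bound $t(\overline{H})$ from above by the Petingi--Rodríguez estimate and to compute $t(\overline{G_0(p,q)})$ essentially exactly, exploiting that the complete-graph components of $G_0(p,q)$ contribute nothing to the gap sequence. Throughout write $\Gamma=G_0(p,q)$, $n=n(\Gamma)$, and $m=|E(\Gamma)|$. First I would dispose of the degenerate cases. For all large $n$ the graph $\Gamma$ has at least two connected components (since $G_0$ is fixed while $n$ grows), so its complement is a join and hence connected, giving $t(\overline{\Gamma})>0$; and if $\overline{H}$ is disconnected then $t(\overline{H})=0<t(\overline{\Gamma})$. Thus I may assume $\overline{H}$ is connected and apply Lemma~\ref{lemma:PetingiRodriguez} to $H$; dropping the factor $e^{-2\nu(H)/(3n^3)}\le 1$ yields the clean upper bound $t(\overline{H})\le n^{n-2}f(d(H),n)$.

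Next I would produce an essentially exact formula for $t(\overline{\Gamma})$. The Laplacian sequence $\ell_k$ and the sum $\sum_i d_i(d_i+1)^{k-1}$ are both additive over connected components, so the gap $g_k$ is additive as well; by Lemma~\ref{lemma:gaps} every complete-graph component contributes zero, whence $g_k(\Gamma)=g_k(G_0)$ for every $k$, independently of $p$ and $q$. Running the expansion \eqref{eq:nodrop} backward with equalities retained (rather than the inequality leading to \eqref{eq:poly}) and invoking Lemma~\ref{lemma:t} gives
\begin{equation*}
t(\overline{\Gamma})=n^{n-2}f(d(\Gamma),n)\,e^{-\delta_n},\qquad \delta_n=\sum_{k\ge 3}\frac{g_k(G_0)}{k\,n^{k}} .
\end{equation*}
Because $g_1(G_0)=g_2(G_0)=0$ and $g_k(G_0)\ge 0$, the quantity $\delta_n$ is nonnegative, depends only on $G_0$ and $n$, and satisfies $\delta_n=\tfrac{2\nu(G_0)}{3n^{3}}+O(n^{-4})=O(n^{-3})$.

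The heart of the argument is a uniform, quantitative form of Lemma~\ref{lemma:f}. Writing $S_k(d)=\sum_i d_i(d_i+1)^{k-1}$, the identity $-\log f(d,n)=\sum_{k\ge 1}S_k(d)/(k n^{k})$ gives
\begin{equation*}
\log\frac{f(d(\Gamma),n)}{f(d(H),n)}=\sum_{k\ge 2}\frac{S_k(H)-S_k(\Gamma)}{k\,n^{k}} .
\end{equation*}
Since $H$ and $\Gamma$ have the same number of edges, $\sum_i d_i(H)=\sum_i d_i(\Gamma)$, so the almost-regular sequence $d(\Gamma)$ is majorized by $d(H)$. Each map $t\mapsto t(t+1)^{k-1}$ is convex on $[0,\infty)$, so Karamata's inequality gives $S_k(H)\ge S_k(\Gamma)$ for all $k$, making every summand above nonnegative. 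Moreover $S_2(d)=\sum_i d_i^2+2m$, and a non-almost-regular integer degree sequence has $\sum_i d_i^2$ exceeding the almost-regular minimum by at least $2$; hence $S_2(H)-S_2(\Gamma)\ge 2$, and the $k=2$ term alone yields $\log\frac{f(d(\Gamma),n)}{f(d(H),n)}\ge \tfrac{1}{n^{2}}$, that is $f(d(H),n)\le f(d(\Gamma),n)\,e^{-1/n^{2}}$.

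Finally I would combine the three estimates:
\begin{equation*}
\frac{t(\overline{H})}{t(\overline{\Gamma})}\le \frac{n^{n-2}f(d(H),n)}{n^{n-2}f(d(\Gamma),n)\,e^{-\delta_n}}\le e^{\,\delta_n-1/n^{2}} .
\end{equation*}
As $\delta_n=O(n^{-3})=o(n^{-2})$ with implied constants depending only on $G_0$, there is an $n_0$ with $\delta_n<n^{-2}$ for all $n\ge n_0$; for such $n$ the exponent is negative and $t(\overline{H})<t(\overline{\Gamma})$. The main obstacle is precisely the passage from the non-uniform strict inequality of Lemma~\ref{lemma:f} to the uniform gain of order $n^{-2}$, together with the need to guarantee that the higher-order terms ($k\ge 3$) do not erode this gain; both are handled by the majorization step, which renders all those terms nonnegative, and by the observation that the gap sequence of $\Gamma$---and hence $\delta_n$---is frozen at that of $G_0$ as $p,q$ grow, so the $n^{-3}$ loss is genuinely dominated by the $n^{-2}$ gain.
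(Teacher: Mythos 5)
The paper does not actually prove this theorem: it is imported verbatim from Petingi--Rodr\'iguez, with Remark~\ref{remark:cobertura} recording the threshold $n_0=2d+n(G_0)(2d)^3$; the only proof of this flavour that the paper itself supplies is the one for the generalization, Theorem~\ref{theorem:generalize}. Your argument is correct but follows a genuinely different route from that machinery. The paper's proof of Theorem~\ref{theorem:generalize} compares $t(\overline{H})$ and $t(\overline{G_0(p,q)})$ by forming the ratio $w(x)=g(x)/h(x,g')$, showing $\log(w(x))'<0$ for $x\geq 2d+n(G_0)(2d)^{c+2}$ and using $w(x)\to 1$ at infinity; crucially, there the competitor $H$ is already assumed almost-regular (indeed in $\mathcal{S}^{(c)}$), so the degree sequences agree and the whole fight happens in the gap terms. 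You instead attack the non-almost-regular case head-on by a termwise comparison of the logarithmic series for $f$: the majorization of $d(\Gamma)$ by $d(H)$ plus Karamata makes every coefficient difference $S_k(H)-S_k(\Gamma)$ nonnegative, and the integrality argument at $k=2$ extracts a uniform gain of $1/n^2$, which you then play against the $O(n^{-3})$ loss $\delta_n$ coming from the gap sequence of $\Gamma$ --- correctly frozen at $g_k(G_0)$ by the equality case of Lemma~\ref{lemma:gaps}, exactly as in Lemma~\ref{lemma:gmh}\ref{gmh3}. This is precisely the quantitative strengthening of Lemma~\ref{lemma:f} that the non-quantitative statement in the paper cannot deliver, and it is the right key idea; it also yields an explicit $n_0$ of the same order as the one quoted in Remark~\ref{remark:cobertura}. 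Two points you gloss over but which hold: the exact formula $t(\overline{\Gamma})=n^{n-2}f(d(\Gamma),n)e^{-\delta_n}$ needs $\overline{\Gamma}$ connected (true once $p+q\geq 1$), and the expansion of $\log f(d(H),n)$ at $x=n$ needs every degree of $H$ to be at most $n-2$, which is guaranteed exactly when $\overline{H}$ has no isolated vertex --- a case you have already discarded. In short: correct, self-contained, and complementary to the paper's derivative-based argument, which is tailored to the higher-order refinements where the gain sits at order $n^{-(c+2)}$ rather than $n^{-2}$.
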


\begin{remark}\label{remark:cobertura}
The authors in~\cite{Petingi} found values for $n_0$ satisfying the conditions of Theorem~\ref{theorem:PetingiRodriguez-almostregular}. A possible choice for $n_0$ is $2d+n(G_0)(2d)^3$.  
\end{remark}

A refinement of Theorem~\ref{theorem:PetingiRodriguez-almostregular} takes into consideration 
the fact that, among almost-regular graphs, Lemma~\ref{lemma:PetingiRodriguez} gives priority to $\nu$-min graphs. 
\begin{theorem}[Petingi and Rodríguez~\cite{Petingi}]\label{theorem:PetingiRodriguez}
Let $G_0$ be an almost-regular graph all of whose vertices have degree $d-1$ or $d$. Suppose that $G_0(p,q)$ is $\nu$-min for all $p$ and $q$. Then there exists a positive integer $n_0$ such that each graph $H$ in $\mathcal{S}(G_0(p,q))$ that is not $\nu$-min satisfies that $t(\overline{H})<t(\overline{G_0(p,q)})$ whenever $n(H)\geq n_0$.
\end{theorem}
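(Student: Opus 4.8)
The plan is to split the non-$\nu$-min graphs $H \in \mathcal{S}(G_0(p,q))$ into two families and dispose of them separately. If $H$ is not almost-regular, then Theorem~\ref{theorem:PetingiRodriguez-almostregular} already gives $t(\overline{H}) < t(\overline{G_0(p,q)})$ as soon as $n(H)$ exceeds the threshold $n_0^{(1)}$ supplied by that theorem (e.g.\ the value in Remark~\ref{remark:cobertura}); if $\overline{H}$ happens to be disconnected the claim is trivial since then $t(\overline{H})=0$. Thus the entire content of the refinement lies in the remaining family: $H$ almost-regular but with $\nu(H)$ strictly above the minimum. For such $H$ I would first observe that, being almost-regular with the same vertex and edge counts as $G_0(p,q)$, it has exactly the same degree multiset; hence $f(d(H),n)=f(d(G_0(p,q)),n)$, and the two tree-numbers can differ only through their $\nu$- and higher-gap contributions.

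The second step is to upgrade the spectral estimate behind Lemma~\ref{lemma:PetingiRodriguez} to an exact identity. Writing $n=n(H)$ and combining Lemma~\ref{lemma:t} with the expansion~\eqref{eq:nodrop}, I obtain, for every $G$ whose complement is connected,
\[
t(\overline{G}) = n^{n-2}\, f(d(G),n)\, e^{-\frac{2\nu(G)}{3n^3}}\,\exp\left(-\sum_{k\ge 4}\frac{g_k(G)}{kn^k}\right),
\]
of which the bound~\eqref{eq:bound1} is the special case where the last nonnegative exponent is discarded. Applying this to $H$ I keep only the upper bound $t(\overline{H}) \le n^{n-2} f(d(H),n)\, e^{-2\nu(H)/(3n^3)}$, dropping the nonnegative tail $g_k(H)\ge 0$ guaranteed by Lemma~\ref{lemma:gaps}; applying it to $G_0(p,q)$ I retain the full identity. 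The decisive observation is that $g_k$ is additive over disjoint unions and vanishes on complete graphs (by the equality case of Lemma~\ref{lemma:gaps}), so that $g_k(G_0(p,q)) = g_k(G_0)$ for every $k$. In particular the tail $\sum_{k\ge4} g_k(G_0(p,q))/(kn^k)$ is \emph{frozen} at a quantity determined by the fixed core $G_0$ and does not grow with $p$ or $q$.

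Dividing the two estimates, cancelling the common factor $f(\cdot,n)$, and using $\nu(H) \ge \nu(G_0(p,q))+1$ (integrality together with the $\nu$-min hypothesis, which forces $\nu(H) > \nu(G_0(p,q))$), the desired inequality $t(\overline{H}) < t(\overline{G_0(p,q)})$ reduces to
\[
\sum_{k\ge 4}\frac{g_k(G_0)}{kn^k} < \frac{2}{3n^3}.
\]
The final step is to verify this for all large $n$. Bounding $g_k(G_0) \le \ell_k(G_0) \le n(G_0)\,\mu_{\max}^{k}$, where $\mu_{\max}\le 2d$ is the largest Laplacian eigenvalue of $G_0$, the left-hand side is $O\!\big(n(G_0)d^4/n^4\big)$, which is eventually dominated by $\tfrac{2}{3n^3}$; a short geometric-series estimate converts this into an explicit threshold of order $n(G_0)d^4$. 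I would then take $n_0$ to be the maximum of this threshold and $n_0^{(1)}$.

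I expect the main obstacle to be the bookkeeping in this last comparison: one must confirm that the $\Theta(n^{-3})$ gain produced by a single extra induced $3$-path genuinely beats the \emph{entire} discarded tail $\sum_{k\ge4} g_k(G_0)/(kn^k)$ uniformly in $(p,q)$. What makes this possible is exactly the additivity-and-vanishing of $g_k$ on complete graphs, which pins the tail to the $p,q$-independent data of $G_0$ and keeps its leading term at order $n^{-4}$, strictly below the $\nu$-gain of order $n^{-3}$. The only secondary point is to record that $\overline{H}$ is connected in the almost-regular case (so that the spectral identity applies), which for almost-regular $H$ of bounded degree $d$ holds once $n>d+1$.
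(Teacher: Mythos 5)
Your proposal is correct and follows essentially the same route as the paper's proof of its generalization, Theorem~\ref{theorem:generalize} (of which this statement is the case $c=2$): reduce to almost-regular $H$, use the exact spectral identity for $t(\overline{G_0(p,q)})$ together with $g_k(G_0(p,q))=g_k(G_0)$ (Lemma~\ref{lemma:gmh}), bound the discarded tail by $g_k(G_0)\le n(G_0)(2d)^k$ (Lemma~\ref{lemma:boundgap}), and win the order-$n^{-3}$ versus order-$n^{-4}$ comparison, yielding a threshold of order $n(G_0)(2d)^4$. The only cosmetic difference is that you compare the exponents directly via a geometric series, while the paper proves the same inequality by showing that $\log\left(g(x)/h(x,g')\right)$ has negative derivative beyond the threshold and tends to $0$ at infinity.
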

The authors in~\cite{Petingi} also found specific values for $n_0$ satisfying the conditions of Theorem~\ref{theorem:PetingiRodriguez}. The methodology developed by Petingi and Rodríguez basically consists in finding an almost-regular graph $G_0$ meeting the conditions of Theorem~\ref{theorem:PetingiRodriguez} and then find, among 
each $\nu$-min graph in $\mathcal{A}(G_0(p,q))$, the graph (or graphs) whose complement has the maximum number of spanning trees.
\begin{remark}\label{remark:key}
In the inequality of expression~\eqref{ineq:chain}, the authors replaced $\ell_k(G)$ by $\sum_{i=1}^{n}d_i(d_i+1)^{k-1}$ (except when $k=3$). 
However, as $\ell_k(G)$ equals $g_k(G)+\sum_{i=1}^{n}d_i(d_i+1)^{k-1}$,  sharper bounds for $t(\overline{G})$ could be obtained.
\end{remark}

We will employ Remark~\ref{remark:key} to find sharper bounds for the number of spanning trees. As a consequence, a generalization of Theorem~\ref{theorem:PetingiRodriguez} will be given (see Theorem~\ref{theorem:generalize}). A key concept to achieve this goal is that of a trace-minimal graph~\cite{Abrego}; a lexicographic order among sequences of real numbers is first required.
\begin{definition}
Given two sequences of real numbers $(b_i)_{i\in \mathbb{Z}^+}$ and $(c_i)_{i\in \mathbb{Z}^+}$, 
we write  $b_i \preceq c_i$ when precisely one of the following conditions holds:
\begin{enumerate}[label=(\roman*)]
\item For each positive integer $i$ it holds that $b_i = c_i$, or 
\item There exists a positive integer $j$ such that for each $i\in \{1,2,\ldots,j-1\}$ it holds that $b_i\leq c_i$ but $b_j<c_j$.
\end{enumerate}
\end{definition}

For each $G$ in $\mathcal{S}_{n,m}$ we define its \emph{adjacency sequence} $(a_i)_{i\in \mathbb{Z}^+}$ as $a_i(G)=tr(A(G)^i)$.  

\begin{definition}[\'Abrego et al.~\cite{Abrego}]
A graph $G$ in $\mathcal{R}_{d}(n)$ is \emph{trace-minimal} if for each $H$ in $\mathcal{R}_{d}(n)$, $a_i(G) \preceq a_i(H)$. 
\end{definition}
To close this section, we will give a list of results concerning trace-minimal graphs that appeared in~\cite{Abrego}. 

\begin{theorem}[\'Abrego et al.~\cite{Abrego}]
Let $G$ be a graph with maximum girth $g$ in $\mathcal{R}_d(n)$. 
Suppose that for each $H$ in $\mathcal{R}_d(n)$ there exists an integer $k$ such that $k\leq 2g-1$, 
$cyc(G,i)=cyc(H,i)$ for each $i\in \{3,4,\ldots,k-1\}$, and $cyc(G,k)<cyc(H,k)$. Then, $G$ is trace-minimal in $\mathcal{R}_d(n)$. 
\end{theorem}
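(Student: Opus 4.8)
The goal is to show that the adjacency sequence of $G$ is lexicographically smallest in $\mathcal{R}_d(n)$, i.e.\ that $a_i(G)\preceq a_i(H)$ for every $H$ in $\mathcal{R}_d(n)$. The starting point is that $a_i=tr(A^i)$ counts the closed walks of length $i$ (equivalently, the homomorphisms of the $i$-cycle into the graph). The plan is to prove the following local description of these counts and then read the comparison off the hypothesis. \textbf{Key Lemma.} For a $d$-regular graph $K$ of girth $g_0$ and every $i$ with $3\le i\le 2g_0-1$,
\[
a_i(K)=\phi_i(n,d)+\sum_{j=3}^{i}c_{i,j}(d)\,cyc(K,j),
\]
where $\phi_i$ and the $c_{i,j}$ depend only on $n,d,i,j$ and not on $K$, and the top coefficient is $c_{i,i}(d)=2i$. (The value $2i$ records that each $i$-cycle is traversed by exactly $2i$ closed walks of length $i$: $i$ choices of starting vertex times two orientations; note also $cyc(K,j)=0$ for $j<g_0$.)

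First I would establish the structural fact behind the range $i\le 2g_0-1$: the set of edges actually used by a closed walk of length $i$ has cyclomatic number at most one. Indeed, covering two independent cycles, each of length at least $g_0$, by a single closed walk forces length at least $2g_0$; the extremal configurations are the figure-eight (already Eulerian, of length $\ge 2g_0$) and the theta graph with three internally disjoint $u$–$v$ paths of lengths $a\le b\le c$, where duplicating the shortest path gives covering length $(a+b+c)+a=(a+b)+(a+c)\ge 2g_0$. Hence for $i\le 2g_0-1$ every traced subgraph is a tree or unicyclic. The walks with tree support are pure backtracking walks; using that the number of homomorphisms of any tree $T$ into a $d$-regular graph is the universal number $n\,d^{\,|E(T)|}$, together with the inclusion–exclusion that expresses injective (subgraph) counts through these homomorphism counts, the tree contribution collects into a term $\phi_i(n,d)$ depending only on $n,d$. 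The remaining walks have unicyclic support: a $j$-cycle with $g_0\le j\le i$, decorated by small trees whose total number of edges is at most $(i-j)/2<g_0$, so no further cycle is created and the number of admissible decorations is again a function of $d$ alone; summing over decorations contributes $c_{i,j}(d)\,cyc(K,j)$, with the undecorated top case $j=i$ giving exactly $2i\,cyc(K,i)$. The main obstacle is precisely this bookkeeping: checking that every graph-dependent part organizes into the cycle counts $cyc(K,j)$ with coefficients independent of $K$. The enabling observation is that any vertex identification which would spoil universality creates a cycle of length $\le i\le 2g_0-1$, while girth forbids cycles shorter than $g_0$, so only identifications producing genuine $j$-cycles with $g_0\le j\le i$ survive.

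Finally, with the Key Lemma in hand the theorem would follow quickly. Fix $H\neq G$ in $\mathcal{R}_d(n)$ and let $k\le 2g-1$ be the integer supplied by the hypothesis, so that $cyc(G,j)=cyc(H,j)$ for $3\le j\le k-1$ and $cyc(G,k)<cyc(H,k)$. Since $G$ has maximum girth $g$, we have $cyc(G,j)=0$ for $j<g$, hence $cyc(H,j)=0$ for $j<\min(g,k)$; a short case check then shows that the girth $g_H$ of $H$ equals $g$ when $k\ge g$ (because $cyc(H,g)\ge cyc(G,g)>0$) and equals $k$ when $k<g$ (because $cyc(H,k)>cyc(G,k)=0$), so in either case $k\le 2g_H-1$, and also $k\le 2g-1$. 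Thus the Key Lemma applies to both $G$ and $H$ for every $i\le k$. For $i\in\{1,2\}$ one has $a_i(G)=a_i(H)$ (namely $0$ and $nd$); for $3\le i\le k-1$ the matching cycle counts give $a_i(G)=a_i(H)$; and at $i=k$,
\[
a_k(G)-a_k(H)=2k\bigl(cyc(G,k)-cyc(H,k)\bigr)<0.
\]
Hence the first index at which the two sequences differ is $k$, where $G$ is strictly smaller, so $a_i(G)\preceq a_i(H)$ by the second clause of the order. As $H$ was arbitrary (the case $H=G$ being immediate from the first clause), $G$ is trace-minimal in $\mathcal{R}_d(n)$.
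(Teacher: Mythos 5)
The paper does not prove this theorem; it is quoted verbatim from \'Abrego et al.~\cite{Abrego} as background, so there is no in-paper proof to compare against. Your argument reconstructs what is essentially the original source's proof: classify closed walks of length $i\le 2g_0-1$ by their edge-support, show the support is a tree or unicyclic, and conclude that $a_i$ equals a universal function of $n,d$ plus a combination of the cycle counts $cyc(\cdot,j)$ with universal coefficients and positive top coefficient $2i$. The structure is sound, the final lexicographic comparison (including the case analysis showing $k\le 2g_H-1$ so the Key Lemma applies to $H$ as well) is correct, and the crucial observation --- that girth kills all ``accidental'' identifications/adjacencies that would make the coefficients graph-dependent --- is the right one.

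Two spots deserve more care than your sketch gives them. First, in the cyclomatic-number argument you treat the figure-eight and the theta graph but omit the dumbbell (two cycles joined by a nontrivial path); it is easy --- the bridge path must be traversed an even number of times, giving length at least $2g_0+2$ --- but it is a genuine third case of a core with cyclomatic number two. Second, the Key Lemma is applied simultaneously to $G$ and to $H$, which may have different girths (namely $g_H=k<g$ when $k<g$), so you need $\phi_i$ and $c_{i,j}$ to be the \emph{same} functions for both graphs, not merely ``independent of $K$ among graphs of a fixed girth.'' Your sketch does support this (a tree support has at most $i/2$ edges, so accidental cycles have length at most $i/2+1<k\le g_H$ and cannot occur in either graph), but this is exactly the bookkeeping you flag as the main obstacle, and a complete write-up would have to carry out the inclusion--exclusion explicitly, as \'Abrego et al.\ do. Neither point is a flaw in the approach; they are the places where the sketch would need to be expanded into a full proof.
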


\begin{corollary}[\'Abrego et al.~\cite{Abrego}]
If $G$ is the only graph in $\mathcal{R}_d(n)$ with maximum girth then $G$ is the only trace-minimal graph in $\mathcal{R}_d(n)$. 
\end{corollary}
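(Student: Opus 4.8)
The plan is to obtain the corollary directly from the preceding theorem by exhibiting, for each competitor $H\not\cong G$, a single index $k$ at which the cycle counts separate $H$ from $G$. Write $g$ for the maximum girth attained in $\mathcal{R}_d(n)$ and assume $G$ is its unique achiever. First I would dispose of the degenerate cases. If $d\le 1$ there are no cycles and $\mathcal{R}_d(n)$ has a single member up to isomorphism; if $g=3$ then every graph already has girth $3=g$ (girth is always at least $3$), so uniqueness of the maximum-girth graph again forces $\mathcal{R}_d(n)=\{G\}$ up to isomorphism. In either situation the statement is immediate, so I may assume $d\ge 2$ and $g\ge 4$, in which case every graph in $\mathcal{R}_d(n)$ has finite girth at least $3$.

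Now fix any $H\in\mathcal{R}_d(n)$ with $H\not\cong G$. Since $G$ is the \emph{only} graph of girth $g$, the girth $k:=\mathrm{girth}(H)$ satisfies $3\le k<g$, hence $k\le g-1\le 2g-2<2g-1$, so the requirement $k\le 2g-1$ of the theorem holds. For each $i\in\{3,\ldots,k-1\}$ both graphs have girth exceeding $i$ (indeed $\mathrm{girth}(G)=g>k>i$ and $\mathrm{girth}(H)=k>i$), so $cyc(G,i)=cyc(H,i)=0$. At the index $k$ itself, $G$ has no $k$-cycle because $k<g$, while $H$ has at least one by definition of its girth; thus $cyc(G,k)=0<cyc(H,k)$. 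These are exactly the hypotheses of the preceding theorem, so it certifies $a_i(G)\preceq a_i(H)$; as $H$ was an arbitrary non-isomorphic competitor and $H\cong G$ gives equality, $G$ is trace-minimal.

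The step requiring care, and the crux for uniqueness, is to see that this comparison is the \emph{strict} alternative (ii) of the lexicographic order rather than merely $\preceq$. I would verify this by counting closed walks, since $a_i=tr(A^i)$ counts closed walks of length $i$. For $i\le k-1$ neither $G$ nor $H$ has a cycle of length $\le i$ (because $i<k\le g$), so every closed walk of length $i$ in either graph is confined to a tree and their number equals one quantity $T_i$ depending only on $n$ and $d$; hence $a_i(G)=T_i=a_i(H)$. At length $k$ the graph $G$ still has no cycle of length $\le k$ and contributes only $a_k(G)=T_k$, whereas in $H$ the closed walks of length $k$ split into the same tree-confined walks plus the $2k$ traversals (start vertex times direction) of each of its $cyc(H,k)\ge 1$ shortest cycles, giving $a_k(H)=T_k+2k\,cyc(H,k)>a_k(G)$. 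Therefore $a_i(G)\prec a_i(H)$ strictly for every $H\not\cong G$.

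Finally I would close the uniqueness argument. Suppose $H$ is trace-minimal with $H\not\cong G$. Trace-minimality of $H$ forces $a_i(H)\preceq a_i(G)$, while the previous paragraph gives $a_i(G)\prec a_i(H)$; since $\preceq$ is a total, and in particular antisymmetric, order on sequences, these cannot both hold. This contradiction shows that every trace-minimal graph is isomorphic to $G$, and together with the trace-minimality of $G$ established above it yields that $G$ is the only trace-minimal graph in $\mathcal{R}_d(n)$. The main obstacle is the third paragraph: one must confirm that separating the two graphs at $k=\mathrm{girth}(H)$ produces a genuinely strict lexicographic gap, which rests on the fact that a closed walk of length $k$ in a graph whose shortest cycle has length $k$ either stays within a tree or traverses a single $k$-cycle.
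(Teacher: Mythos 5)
The paper states this corollary without proof, importing it from \'Abrego et al., so there is no in-paper argument to compare against; your derivation is the natural one and it is correct. Applying the preceding theorem with $k=\mathrm{girth}(H)$ for each $H\not\cong G$ (after disposing of the cases $d\le 1$ and $g=3$, where $\mathcal{R}_d(n)$ is a singleton) yields trace-minimality, and you rightly observe that the theorem's bare conclusion does not by itself give \emph{uniqueness}: your closed-walk count $a_k(H)=T_k+2k\,cyc(H,k)>T_k=a_k(G)$, valid because a closed walk of length $k$ in a graph of girth $k$ either spans a tree or traverses a single $k$-cycle, supplies exactly the strict lexicographic separation needed to exclude any other trace-minimal graph.
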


Recall that $\tau(G)$ denotes the number of triangles in $G$. Let $\tau_d(n)=\min\{\tau(G): G\in \mathcal{R}_d(n)\}$.
\begin{lemma}[\'Abrego et al.~\cite{Abrego}]\label{lemma:test}
Let $G$ be a graph in $\mathcal{R}_{n-\delta-1}(n)$ for some integer $\delta$ such that $\delta\geq 3$. 
Let $q$ and $\rho$ be nonnegative integers satisfying $n=q(\delta+1)+\rho$ where $\rho\in \{0,1,\ldots,\delta\}$. 
If $G$ is trace-minimal, then either $G=H\wedge \overline{K_{\delta+1}}$ for some trace-minimal graph $H$ in $\mathcal{R}_{n-2\delta-2}(n-\delta-1)$ or the following inequality holds
\begin{equation*}
n \leq \frac{\rho}{4}((\delta+1)^2-\rho^2) + \frac{3}{2}\tau_{\rho}(d+1+\rho).    
\end{equation*}
\end{lemma}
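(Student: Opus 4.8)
The plan is to pass to the complement and recast trace-minimality as a triangle-maximization problem. Write $\Gamma=\overline{G}\in\mathcal{R}_{\delta}(n)$. Since $a_1(G)=tr(A(G))=0$ and $a_2(G)=tr(A(G)^2)=n(n-\delta-1)$ are constant over $\mathcal{R}_{n-\delta-1}(n)$, while $a_3(G)=tr(A(G)^3)=6\tau(G)$, the first coordinate in which the adjacency sequence can discriminate is $a_3$; hence trace-minimality of $G$ forces $\tau(G)$ to be minimum in $\mathcal{R}_{n-\delta-1}(n)$. A double counting of vertex triples according to how many of their three pairs are edges of $G$ gives $\tau(G)+\tau(\Gamma)=\tfrac12\big(n\binom{n-\delta-1}{2}+n\binom{\delta}{2}-\binom{n}{3}\big)$, which is constant over the class, so $G$ trace-minimal implies that $\Gamma$ \emph{maximizes} the number of triangles among all $\delta$-regular graphs on $n$ vertices. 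Finally, a set of $\delta+1$ mutually non-adjacent vertices of $G$ each joined to all other vertices is exactly a $K_{\delta+1}$-component of $\Gamma$, so $G=H\wedge\overline{K_{\delta+1}}$ if and only if $\Gamma$ has a component isomorphic to $K_{\delta+1}$. The proof then splits on whether such a component exists.

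Suppose first that $\Gamma$ has a $K_{\delta+1}$-component, so $\Gamma=K_{\delta+1}\cup\Gamma'$ with $\Gamma'\in\mathcal{R}_{\delta}(n-\delta-1)$ and $G=\overline{K_{\delta+1}}\wedge H$, where $H=\overline{\Gamma'}\in\mathcal{R}_{n-2\delta-2}(n-\delta-1)$. To see that $H$ is itself trace-minimal I would use the join spectrum: the adjacency spectrum of $\overline{K_{\delta+1}}\wedge H'$ consists of $\delta$ copies of $0$, the non-principal eigenvalues of $H'$, and the two eigenvalues $\theta_{1,2}$ of the $2\times 2$ equitable quotient, which depend only on $n$ and $\delta$. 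Consequently $a_k(\overline{K_{\delta+1}}\wedge H')=a_k(H')+c_k$ for a constant $c_k$ independent of $H'$, so the lexicographic order of the adjacency sequences of such graphs matches that of the corresponding $H'$. Trace-minimality of $G$ among graphs with a $K_{\delta+1}$-component therefore descends to trace-minimality of $H$ in $\mathcal{R}_{n-2\delta-2}(n-\delta-1)$, giving the first alternative.

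Suppose instead that $\Gamma$ has no $K_{\delta+1}$-component. For each vertex $v$ the number of triangles through $v$ equals $e_v$, the number of edges of $\Gamma$ with both endpoints in the neighborhood $N_\Gamma(v)$, which is at most $\binom{\delta}{2}$ with equality exactly when $\{v\}\cup N_\Gamma(v)$ is a $K_{\delta+1}$-component. Summing, $\tau(\Gamma)=\tfrac13\sum_v e_v=\tfrac13\big(n\binom{\delta}{2}-D\big)$, where the total defect $D$ has $\mathrm{def}(v)\ge 1$ at every vertex in the present case, whence $D\ge n$. To convert maximality into a bound on $n$ I would compare $\Gamma$ with the explicit competitor $\Gamma^{*}=(q-1)K_{\delta+1}\cup B$, where $B\in\mathcal{R}_{\delta}(m_0)$ with $m_0=\delta+1+\rho$ is triangle-maximal; by the complement identity on $m_0$ vertices, maximizing $\tau(B)$ is the same as minimizing triangles in the $\rho$-regular graph $\overline{B}$, i.e.\ $\tau(\overline{B})=\tau_{\rho}(\delta+1+\rho)$. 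Since $\Gamma$ is triangle-maximal, $\tau(\Gamma)\ge\tau(\Gamma^{*})$, and because $(q-1)(\delta+1)=n-m_0$ the clique part telescopes; combining this with $D\ge n$ and simplifying the constant (the complement identity collapses $m_0\binom{\delta}{2}-3\,[\tau(B)+\tau(\overline B)]$ to $\tfrac{\rho}{2}((\delta+1)^2-\rho^2)$) yields a bound of exactly the stated shape, namely $n\le \tfrac{\rho}{2}((\delta+1)^2-\rho^2)+3\,\tau_{\rho}(\delta+1+\rho)$, i.e.\ the claim weakened by a factor two.

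The main obstacle is closing this factor of two. The crude estimate $\mathrm{def}(v)\ge 1$ is not tight: a vertex whose neighborhood is a near-clique (defect one) has two exceptional neighbors each carrying an edge leaving $\{v\}\cup N_\Gamma(v)$, and tracing the degree constraints shows these exceptional vertices are themselves forced to carry extra defect, so the defects cannot all equal one. Making this propagation quantitative—equivalently, exhibiting a switching that rewires any triangle-maximal $\delta$-regular graph lacking a $K_{\delta+1}$-component into one possessing such a component as soon as $n$ exceeds the claimed threshold—is what upgrades $D\ge n$ to the effective lower bound on the defect, and is precisely the source of the coefficients $\tfrac14$ (rather than $\tfrac12$) and $\tfrac32$ (rather than $3$). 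The residual step is the routine algebraic check that the complement identity on $\delta+1+\rho$ vertices simplifies to the closed form $\tfrac{\rho}{4}((\delta+1)^2-\rho^2)$.
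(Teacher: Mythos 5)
This lemma is quoted from \'Abrego et al.\ and the paper gives no proof of it, so there is nothing in-paper to compare against; I can only assess your argument on its own terms. Your setup is sound: $a_1$ and $a_2$ are constant on $\mathcal{R}_{n-\delta-1}(n)$ and $a_3=6\tau$, so trace-minimality of $G$ does force $\tau(G)$ minimal and hence, by the complementation identity, $\Gamma=\overline{G}$ triangle-maximal among $\delta$-regular graphs on $n$ vertices; the equivalence between $G=H\wedge\overline{K_{\delta+1}}$ and $\Gamma$ having a $K_{\delta+1}$-component is correct; and the join-spectrum argument ($a_k(\overline{K_{\delta+1}}\wedge H')=a_k(H')+c_k$ with $c_k$ depending only on $n,\delta$) cleanly descends trace-minimality to $H$. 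The comparison with the competitor $(q-1)K_{\delta+1}\cup B$ and the algebra collapsing the complement identity on $\delta+1+\rho$ vertices to $\tfrac{\rho}{2}((\delta+1)^2-\rho^2)$ also check out.

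However, there is a genuine gap, which you yourself flag: the chain $\tau(\Gamma^*)\le\tau(\Gamma)\le\tfrac13\bigl(n\binom{\delta}{2}-D\bigr)$ together with the crude defect bound $D\ge n$ proves only $n\le\tfrac{\rho}{2}((\delta+1)^2-\rho^2)+3\tau_\rho(\delta+1+\rho)$, twice the stated bound. Reaching the stated coefficients $\tfrac{\rho}{4}$ and $\tfrac32$ requires, in your framework, upgrading to $D\ge 2n$ for triangle-maximal $\delta$-regular graphs with no $K_{\delta+1}$-component, and the ``defect propagation'' you sketch does not deliver this. Concretely: if a vertex $v$ has defect $1$ with missing edge $xy$ in $N_\Gamma(v)$, then every $w\in N_\Gamma(v)\setminus\{x,y\}$ also has defect exactly $1$, while $x$ and $y$ have defect exactly $\delta-1$; the resulting average defect over the block $\{v\}\cup N_\Gamma(v)$ is $3(\delta-1)/(\delta+1)$, which is strictly less than $2$ for $\delta\in\{3,4\}$ --- precisely values covered by the lemma (and used later with $\delta=4$ in Lemma~\ref{lemma:quartic}). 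So the propagation must be combined with the triangle-maximality hypothesis in an essential way, not just with regularity, and that combination is the actual content of the lemma; as written, your proposal establishes only the weaker inequality.
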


\section{Main results}\label{section:main}
The main result of this section is Theorem~\ref{theorem:summary}, which states that there exists infinitely many trace-minimal graphs that are $t$-optimal. First, let us find bounds for $t(\overline{G})$ 
that are sharper than the one given in Lemma~\ref{lemma:PetingiRodriguez} by using Remark~\ref{remark:key}.

\begin{lemma}\label{lemma:improvedbound}
For each graph $G$ on $n$ vertices such that $\overline{G}$ is connected and each positive integer $c$,
$P_G(x) \leq x^n e^{-\sum_{k=1}^{c}\frac{g_k(G)}{kx^k}} f(d(G),x)$,  
for all $x\geq n$. Further, $t(\overline{G}) \leq n^{n-2}e^{-\sum_{k=1}^{c}\frac{g_k(G)}{kx^k}}f(d(G),n)$, where $f$ is defined in expression~\eqref{eq:f}. 
\end{lemma}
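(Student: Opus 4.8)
The plan is to repeat the derivation leading to Lemma~\ref{lemma:PetingiRodriguez}, but to retain an arbitrary finite prefix of the gap sum instead of only its $k=3$ term. I would start from the exact identity~\eqref{eq:nodrop}, namely $-\log(P_G(x)/x^n)=\sum_{k=1}^\infty \ell_k(G)/(kx^k)$, which holds for every $x\ge n$ because connectivity of $\overline{G}$ places all Laplacian eigenvalues of $G$ in $[0,n)$ and hence keeps $\lambda_i/x<1$ throughout $x\ge n$. Writing $\ell_k(G)=g_k(G)+\sum_{i=1}^n d_i(d_i+1)^{k-1}$ separates the right-hand side into a gap part $\sum_{k=1}^\infty g_k(G)/(kx^k)$ and a degree part; the degree part is precisely the series evaluated in the chain~\eqref{ineq:chain}, where it was shown to equal $-\log f(d(G),x)$ with $f$ as in~\eqref{eq:f}.

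The decisive step is to truncate the gap part. Lemma~\ref{lemma:gaps} gives $\ell_k(G)\ge \sum_{i=1}^n d_i(d_i+1)^{k-1}$, i.e.\ $g_k(G)\ge 0$, for every $k$; since $x\ge n\ge 1$ makes each denominator $kx^k$ positive, every term of the gap series is nonnegative, so discarding all terms with index exceeding $c$ can only decrease it. Hence $\sum_{k=1}^\infty g_k(G)/(kx^k)\ge \sum_{k=1}^c g_k(G)/(kx^k)$, and therefore $-\log(P_G(x)/x^n)\ge \sum_{k=1}^c g_k(G)/(kx^k)-\log f(d(G),x)$. Multiplying by $-1$, exponentiating (the exponential being increasing, and $f(d(G),x)>0$ since connectivity of $\overline{G}$ forces $d_i\le n-2<x$), and multiplying by $x^n$ yields the first inequality $P_G(x)\le x^n e^{-\sum_{k=1}^c g_k(G)/(kx^k)}f(d(G),x)$.

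For the second assertion I would set $x=n$, which is legitimate because the identity~\eqref{eq:nodrop} is valid at $x=n$ (the eigenvalues being strictly below $n$). Lemma~\ref{lemma:t}, applied with the roles of $G$ and $\overline{G}$ interchanged, gives $t(\overline{G})=n^{-2}P_G(n)$; substituting the bound on $P_G(n)$ just derived produces $t(\overline{G})\le n^{n-2}e^{-\sum_{k=1}^c g_k(G)/(kn^k)}f(d(G),n)$, which is the stated inequality with $x=n$ in the exponent.

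I do not anticipate a genuine obstacle: this is the argument of Petingi and Rodr\'iguez with a single added observation. The only points demanding care are the validity of the logarithmic series up to and including $x=n$ (which is exactly the role played by the hypothesis that $\overline{G}$ is connected, through $\lambda_{\max}(L(G))<n$), and the sign fact $g_k(G)\ge 0$ supplied by Lemma~\ref{lemma:gaps}, which is what makes the truncation an inequality in the correct direction rather than the wrong one. Both ingredients are already available, so the proof should be short.
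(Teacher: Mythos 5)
Your proposal is correct and follows essentially the same route as the paper: split $\ell_k(G)$ into the degree part (already summed to $-\log f(d(G),x)$ in the chain~\eqref{ineq:chain}) plus the gap part, drop the nonnegative gap terms beyond index $c$ using Lemma~\ref{lemma:gaps}, exponentiate, and apply Lemma~\ref{lemma:t} at $x=n$. Your added care about where the series converges and about the sign of $g_k(G)$ only makes explicit what the paper leaves implicit.
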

\begin{proof}
Let $c$ be any positive integer. By expression~\eqref{eq:nodrop} and Remark~\ref{remark:key}, for all $x\geq n$, 
\begin{align*}
-\log\left(\frac{P_G(x)}{x^n}\right) &=\sum_{k=1}^{\infty}\frac{\ell_k(G)}{kx^k} \geq 
\sum_{k=1}^{\infty}\frac{\sum_{i=1}^{n}d_i(d_i+1)^{k-1}}{kx^k} + \sum_{k=1}^{c}\frac{g_k(G)}{kx^k}\\ 
&= -\log\left(e^{-\sum_{k=1}^{c}\frac{g_k(G)}{kx^k}}\prod_{i=1}^{n}\left(1-\frac{d_i+1}{x} \right)^{\frac{d_i}{d_i+1}} \right),
\end{align*}
where the last equality follows from~\eqref{ineq:chain}. 
Solving for $P_G(x)$ yields the first part of the statement. The second part of the statement follows from Lemma~\ref{lemma:t}.
\end{proof}

Define $\mathcal{S}_{n,m}^{(1)}$ as $\mathcal{S}_{n,m}$. For each positive integer $k$, define $\mathcal{S}_{n,m}^{(k+1)}$ 
as $\mathcal{S}_{n,m}^{(k+1)} = \{G: G\in \mathcal{S}_{n,m}^{(k)}, \ell_{k+1}(G)\leq \ell_{k+1}(H) \text{ for each } H \text{ in }  \mathcal{S}_{n,m}^{(k)}\}$.    
It is simple to check that $\mathcal{S}_{n,m}^{(2)}=\mathcal{A}_{n,m}$ and $\mathcal{S}_{n,m}^{(3)}$ is the set consisting of all $\nu$-min graphs in $\mathcal{A}_{n,m}$. 
In Theorem~\ref{theorem:generalize} we will generalize Theorem~\ref{theorem:PetingiRodriguez}. First, let us prove two technical lemmas.
\begin{lemma}\label{lemma:gmh}
Let $G_0$ be an almost-regular graph all of whose vertices have degree $d-1$ or $d$ and whose Laplacian eigenvalues are 
$\lambda_1,\lambda_2,\ldots,\lambda_n$. Let $p$ and $q$ be nonnegative integers. Define the function $g(x)$ as follows,
\begin{equation}\label{eq:check}
g(x) = \left(1-\frac{d+1}{x}\right)^{pd}\left( 1-\frac{d}{x}\right)^{q(d-1)}\prod_{i=1}^{n}\left(1-\frac{\lambda_i}{x}\right).    
\end{equation}    
Let $n'=n(G_0(p,q))$. Then, the following assertions hold.
\begin{enumerate}[label=(\roman*)]
\item\label{gmh2} $t(\overline{G_0(p,q)}) = (n')^{n'-2}g(n')$.
\item\label{gmh3} The gap sequences of $G_0(p,q)$ and $G_0$ are identical.
\end{enumerate}
\end{lemma}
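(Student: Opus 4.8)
The plan is to prove both parts by exploiting two elementary facts about disjoint unions -- multiplicativity of the Laplacian characteristic polynomial and additivity of the Laplacian sequence -- together with the explicit Laplacian spectrum of a complete graph. Recall that $K_m$ has Laplacian spectrum $\{0, m^{(m-1)}\}$, so $P_{K_m}(x) = x(x-m)^{m-1}$; in particular $P_{K_{d+1}}(x) = x(x-(d+1))^d$ and $P_{K_d}(x) = x(x-d)^{d-1}$, while $P_{G_0}(x) = \prod_{i=1}^n(x-\lambda_i)$.

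For \ref{gmh2}, I would start from Lemma~\ref{lemma:t} applied to the simple graph $\overline{G_0(p,q)}$ on $n'$ vertices. Since its complement is $G_0(p,q)$ itself, this gives $t(\overline{G_0(p,q)}) = (n')^{-2} P_{G_0(p,q)}(n')$, so it remains to evaluate $P_{G_0(p,q)}$ at $n'$. Because $G_0(p,q) = G_0 \cup pK_{d+1} \cup qK_d$ and the Laplacian of a disjoint union is block diagonal, its characteristic polynomial factors as $P_{G_0(p,q)}(x) = P_{G_0}(x)\,P_{K_{d+1}}(x)^p\,P_{K_d}(x)^q$. The computational heart is then to substitute $x=n'$, pull one factor of $n'$ out of each linear factor, and verify that the residual powers of $n'$ collect exactly to $(n')^{n'}$: the $n$ factors of $P_{G_0}$ give $(n')^n$, each copy of $K_{d+1}$ gives $(n')^{d+1}$, and each copy of $K_d$ gives $(n')^d$, for a total exponent $n + p(d+1) + qd = n'$. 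What is left over is precisely the product in \eqref{eq:check}, so $P_{G_0(p,q)}(n') = (n')^{n'} g(n')$ and hence $t(\overline{G_0(p,q)}) = (n')^{n'-2} g(n')$.

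For \ref{gmh3}, I would use that both $\ell_k$ and the quantity $\sum_i d_i(d_i+1)^{k-1}$ are additive over disjoint unions: the former because $tr(L^k)$ is the sum of the traces of the diagonal blocks $L_j^k$, and the latter because it is simply a sum over all vertices. Consequently the gap sequence $g_k = \ell_k - \sum_i d_i(d_i+1)^{k-1}$ is additive as well. By Lemma~\ref{lemma:gaps} every complete graph is (trivially) a union of complete graphs, so $g_k(K_m)=0$ for all $k$; therefore $g_k(G_0(p,q)) = g_k(G_0) + p\,g_k(K_{d+1}) + q\,g_k(K_d) = g_k(G_0)$, giving the claim.

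I do not expect a genuine obstacle here; the only point requiring care is the bookkeeping of multiplicities. Specifically I would double-check that the exponents $pd$ and $q(d-1)$ appearing in $g$ match the multiplicities $d$ and $d-1$ of the nonzero Laplacian eigenvalues of $K_{d+1}$ and $K_d$, and that the $p+q$ factors of $n'$ arising from the single zero eigenvalue of each complete component are exactly what promotes the total exponent of $n'$ from the $G_0$-contribution up to the full $n'$.
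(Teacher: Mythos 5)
Your proposal is correct and follows essentially the same route as the paper: factor $P_{G_0(p,q)}$ over the connected components using the explicit Laplacian spectrum of complete graphs, collect the powers of $x$ to get $P_{G_0(p,q)}(x)=x^{n'}g(x)$, and apply Lemma~\ref{lemma:t}; then use additivity of the gap sequence over disjoint unions together with the equality case of Lemma~\ref{lemma:gaps} to kill the $K_{d+1}$ and $K_d$ contributions. The exponent bookkeeping you flag checks out ($n'-pd-q(d-1)-n=p+q$, matching the zero eigenvalue of each complete component), so there is nothing to add.
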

\begin{proof}
As $J_n$ has rank $1$ and $J_n\mathbf{1}_n = n\mathbf{1}_n$, it has $n-1$ eigenvalues equal to $0$ and a single eigenvalue equal to $n$. As $L(K_n)$ equals $nI_n-J_n$, it has $n-1$ eigenvalues equal to $n$ and a single eigenvalue equal to $0$ and $P_{K_n}(x)=x(x-n)^{n-1}$. As the Laplacian polynomial factors over components, 
$P_{G_0(p,q)}(x)=x^{p+q}(x-(d+1))^{pd}(x-d)^{q(d-1)}\prod_{i=1}^{n}(x-\lambda_i)$. Then, $P_{G_0(p,q)}(x)=x^{n'}g(x)$ and  Lemma~\ref{lemma:t} gives that $t(\overline{G_0(p,q)})=(n')^{-2}P_{G_0(p,q)}(n')=(n')^{n'-2}g(n')$ thus proving~\ref{gmh2}. Finally, let $k$ be any positive integer. As $g_k$ is additive over disjoint graphs, $g_k(G_0(p,q))=g_k(G_0)+pg_k(K_{d+1})+qg_k(K_d)= g_k(G_0)$,  
where we used the second part of the statement of Lemma~\ref{lemma:gaps} for the last equality, thus proving~\ref{gmh3}.
\end{proof}

\begin{lemma}\label{lemma:boundgap}
Let $G_0$ be in $\mathcal{A}_{n,m}$ all of whose vertices have degree $d-1$ or $d$. 
For each positive integer $k$, $g_k(G_0)\leq n (2d)^{k}$.
\end{lemma}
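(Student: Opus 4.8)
The plan is to bound $g_k(G_0)$ by controlling the Laplacian sequence $\ell_k(G_0)$ directly, since the gap sequence is just $\ell_k$ with a nonnegative quantity subtracted. First I would recall that, by the definition of the gap sequence, $g_k(G_0) = \ell_k(G_0) - \sum_{i=1}^{n}d_i(d_i+1)^{k-1}$, and that the subtracted sum is manifestly nonnegative because every $d_i \geq 0$. Hence it suffices to establish the cleaner inequality $\ell_k(G_0) \leq n(2d)^k$, from which $g_k(G_0) \leq \ell_k(G_0) \leq n(2d)^k$ follows immediately.

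Next I would rewrite $\ell_k(G_0) = tr(L(G_0)^k) = \sum_{i=1}^{n}\lambda_i^k$, the sum of the $k$-th powers of the Laplacian eigenvalues $\lambda_1,\ldots,\lambda_n$ of $G_0$. Because $L(G_0)$ is positive semidefinite, every $\lambda_i \geq 0$, so each term $\lambda_i^k$ is nonnegative and the entire sum is controlled once I bound the largest eigenvalue from above. The structural input I need is a bound on $\lambda_{\max}(G_0)$ in terms of the maximum degree; since all vertices of $G_0$ have degree $d-1$ or $d$, that maximum degree is $d$.

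The crucial step, and the only place any real content enters, is the bound $\lambda_{\max}(G_0) \leq 2d$. I would obtain this from Gershgorin's disc theorem applied to $L(G_0) = D(G_0) - A(G_0)$: row $i$ has diagonal entry $d_i$ and exactly $d_i$ off-diagonal entries equal to $-1$, so its Gershgorin disc is centered at $d_i$ with radius $d_i$, placing every eigenvalue in the interval $[0, 2d_i] \subseteq [0, 2d]$. This is the standard fact that the largest Laplacian eigenvalue of a graph is at most twice its maximum degree.

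Finally I would assemble the pieces: since $0 \leq \lambda_i \leq 2d$ for each $i$, we have $\lambda_i^k \leq (2d)^k$, and summing over the $n$ eigenvalues yields $\ell_k(G_0) = \sum_{i=1}^{n}\lambda_i^k \leq n(2d)^k$. Combining this with the reduction of the first paragraph gives $g_k(G_0) \leq n(2d)^k$, as required. I do not expect a genuine obstacle here: the whole lemma rests on the elementary Gershgorin bound for $\lambda_{\max}$, and everything else is bookkeeping with nonnegative quantities.
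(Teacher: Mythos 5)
Your proposal is correct and follows essentially the same route as the paper's own proof: both reduce to bounding $\ell_k(G_0)=\sum_i\lambda_i^k$ via the Gershgorin bound $\lambda_i\in[0,2d]$ and then discard the nonnegative term $\sum_i d_i(d_i+1)^{k-1}$. No gaps to report.
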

\begin{proof}
Let $k$ and $G_0$ be as in the statement. Let $\lambda_1,\lambda_2,\ldots,\lambda_n$ be the Laplacian eigenvalues of $G_0$, and let $i\in \{1,2,\ldots,n\}$. As $L(G_0)$ is a semidefinite positive symmetric matrix, we know that $\lambda_i\geq 0$. Additionally, by Gershgorin theorem, $\lambda_i$ lies in the interval $[0,2d]$. On the one hand, $\ell_k(G_0)=\sum_{i=1}^{n}\lambda_i^k\leq n(2d)^k$. On the other hand, 
$\sum_{i=1}^{n}d_i(d_i+1)^{k-1}\geq 0$. Consequently, 
$g_k(G_0)=\ell_k(G_0)-\sum_{i=1}^{n}d_i(d_i+1)^{k-1} \leq n(2d)^k$, 
and the lemma follows.
\end{proof}

\begin{theorem}\label{theorem:generalize}
Let $c$ be any positive integer. Let $G_0$ be an almost-regular graph all of whose vertices have degree $d-1$ or $d$. 
Assume that for each pair of nonnegative integers $p$ and $q$ it holds that $G_0(p,q)\in \mathcal{S}(G_0(p,q))^{(c+1)}$. Then, for any graph $H$ in $\mathcal{S}(G_0(p,q))$ not in $\mathcal{S}(G_0(p,q))^{(c+1)}$ with at least $2d+n(G_0)(2d)^{c+2}$ vertices it holds that 
$t(\overline{H})<t(\overline{G_0(p,q)})$.
\end{theorem}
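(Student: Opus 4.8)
The plan is to compare a closed form for $t(\overline{G_0(p,q)})$ against the sharpened bound of Lemma~\ref{lemma:improvedbound} applied to $H$, after rewriting both through the function $f$ and the gap series of $G_0$. Fix the nonnegative integers $p,q$ and set $n'=n(G_0(p,q))=n(H)$; note $n'>2d$ under the stated hypothesis, so $\overline{G_0(p,q)}$ and (when $H$ is almost-regular) $\overline{H}$ are connected. First I would derive the identity $t(\overline{G_0(p,q)})=(n')^{\,n'-2}\,f(d(G_0(p,q)),n')\,e^{-E_0}$, where $E_0=\sum_{k=1}^{\infty} g_k(G_0)/(k(n')^{k})$. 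This comes from taking $-\log$ of the factorization $P_{G_0(p,q)}(x)=x^{n'}g(x)$ recorded in Lemma~\ref{lemma:gmh}, expanding by~\eqref{eq:nodrop}, splitting each $\ell_k$ into its degree part (which reassembles into $-\log f$ exactly as in~\eqref{ineq:chain}) and its gap part $g_k(G_0(p,q))=g_k(G_0)$, and evaluating at $x=n'$; convergence of $E_0$ is guaranteed by Lemma~\ref{lemma:boundgap} together with $2d/n'<1$.

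Because $\mathcal{S}(G_0(p,q))^{(c+1)}\subseteq\mathcal{S}(G_0(p,q))^{(2)}=\mathcal{A}(G_0(p,q))$, any $H\notin\mathcal{S}(G_0(p,q))^{(c+1)}$ is either not almost-regular or is almost-regular but exits the chain at a later stage. In the first situation I would simply invoke Theorem~\ref{theorem:PetingiRodriguez-almostregular}: by Remark~\ref{remark:cobertura} its threshold may be taken to be $2d+n(G_0)(2d)^{3}$, which is at most $2d+n(G_0)(2d)^{c+2}$ for every $c\ge 1$, so the hypothesis $n(H)\ge 2d+n(G_0)(2d)^{c+2}$ already yields $t(\overline{H})<t(\overline{G_0(p,q)})$. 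The content of the theorem thus lies in the second situation.

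So assume $H$ is almost-regular. Then $H$ has the same degree sequence as $G_0(p,q)$ (an almost-regular degree sequence in a fixed class $\mathcal{S}_{n',m'}$ is unique up to permutation), whence $f(d(H),n')=f(d(G_0(p,q)),n')$ and the factors $(n')^{\,n'-2}f$ cancel. Let $j$ be the least index with $H\notin\mathcal{S}(G_0(p,q))^{(j)}$; then $3\le j\le c+1$. Since all members of $\mathcal{S}(G_0(p,q))^{(j-1)}$ share the values $\ell_2,\dots,\ell_{j-1}$ while $G_0(p,q)\in\mathcal{S}(G_0(p,q))^{(j)}$ attains the minimum of $\ell_j$ over $\mathcal{S}(G_0(p,q))^{(j-1)}$, I obtain $\ell_k(H)=\ell_k(G_0(p,q))$ for $k<j$ and $\ell_j(H)>\ell_j(G_0(p,q))$; as the traces $\ell_k=tr(L^k)$ are integers, the strict inequality improves to $\ell_j(H)\ge\ell_j(G_0(p,q))+1$. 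Passing to gaps (the degree part cancels) gives $g_k(H)=g_k(G_0)$ for $k<j$ and $g_j(H)\ge g_j(G_0)+1$. Applying Lemma~\ref{lemma:improvedbound} to $H$ with its parameter chosen to be $c+1$, the target inequality reduces to $\sum_{k=1}^{c+1} g_k(H)/(k(n')^{k}) > E_0$.

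To close I would bound the difference of these two quantities from below, keeping the guaranteed surplus at index $j$ and using $g_k(H)\ge 0$ together with Lemma~\ref{lemma:boundgap} to control every remaining term, obtaining the lower bound $\frac{1}{j(n')^{j}}-\sum_{k=j+1}^{\infty}\frac{n(G_0)(2d)^{k}}{k(n')^{k}}$. Summing the geometric tail and clearing denominators, positivity of this expression is equivalent to $n'-2d>\frac{j}{j+1}\,n(G_0)(2d)^{j+1}$, which is implied by $n'\ge 2d+n(G_0)(2d)^{c+2}$ because $j\le c+1$ and $\frac{j}{j+1}<1$. I expect this final estimate to be the crux: one must confirm that the single unit of excess in $g_j$, discounted by $(n')^{-j}$, outweighs the entire tail of unmatched higher gaps (each discounted by $(n')^{-k}$ and bounded by $n(G_0)(2d)^{k}$) and that the resulting bound on $n'$ is precisely the stated threshold; the remaining steps are routine manipulations of the logarithmic identities from Section~\ref{section:background}.
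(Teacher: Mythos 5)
Your proposal is correct and takes essentially the same route as the paper: the same decomposition of $t(\overline{G_0(p,q)})$ into an exact gap-series formula (via Lemma~\ref{lemma:gmh}) compared against the Lemma~\ref{lemma:improvedbound} upper bound for $t(\overline{H})$, cancellation of the $f$-factors by almost-regularity, a unit integrality surplus at the first differing index, and Lemma~\ref{lemma:boundgap} with a geometric tail yielding precisely the threshold $2d+n(G_0)(2d)^{c+2}$. The only cosmetic differences are that you compare the two exponent series directly at $x=n'$ while tracking the first index $j$ at which $H$ leaves the chain, whereas the paper reduces without loss of generality to $H\in\mathcal{S}(G_0(p,q))^{(c)}$ and proves the corresponding inequality for all $x$ beyond the threshold via a monotonicity argument on the derivative of $\log(g(x)/h(x,g'))$.
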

\begin{proof}
Let $c$ and $G_0$ be as in the statement. If $c=1$ then the statement follows from Theorem~\ref{theorem:PetingiRodriguez-almostregular} and Remark~\ref{remark:cobertura}. Let $c\geq 2$. 
Let $r$ and $s$ be the number of vertices in $G_0$ with degrees $d-1$ and $d$, respectively. Define $h(x,y)$  as follows,
\begin{equation*}
h(x,y) = e^{-\sum_{k=1}^c\frac{g_k(G_0)}{kx^k}-\frac{y}{(c+1)x^{c+1}}}\left(1-\frac{d+1}{x}\right)^{(s+p(d+1))d/(d+1)}\left(1-\frac{d}{x} \right)^{(r+qd)(d-1)/d}.
\end{equation*}
We claim that for each integer $g'>g_{c+1}(G_0)$ it holds that 
$h(x,g')<g(x)$ when $x\geq 2d+n(G_0)(2d)^{c+2}$. 

Once we prove our claim the theorem will follow. In fact, let $H$ be any graph in $\mathcal{S}(G_0(p,q)$ not in $\mathcal{S}(G_0(p,q))^{(c+1)}$. Observe that, without loss of generality, we can assume that $H$ is in $\mathcal{S}(G_0(p,q))^{(c)}$. 
Let $k$ be any positive integer. By Lemma~\ref{lemma:gmh}\ref{gmh3},  $g_k(G_0(p,q))=g_k(G_0)$. As $c\geq 2$, the graph $H$ is almost-regular thus $d(H)=d(G_0(p,q))$ and $g_k(H)-g_k(G_0(p,q))=\ell_k(H)-\ell_k(G_0(p,q))=\ell_k(H)-\ell_k(G_0)$. If $k\in \{1,2,\ldots,c\}$ 
then $g_k(H)=g_k(G_0)$. Let $n'=n(G_0(p,q))$ and $g'=g_{c+1}(H)$. By assumption, $g'>g_{c+1}(G_0)$. If $\overline{H}$ is not connected then $t(\overline{H})=0$ and 
 the result is immediate. Otherwise, Lemma~\ref{lemma:improvedbound} gives that $t(\overline{H})\leq (n')^{n'-2}h(n',g')$. 
By Lemma~\ref{lemma:gmh}\ref{gmh2} we know that $t(\overline{G_0(p,q)})=(n')^{n'-2}g(n')$. Therefore, if $p$ and $q$ are such that $n'\geq 2d+n(G_0)(2d)^{c+2}$ then
$t(\overline{H}) \leq (n')^{n'-2}h(n',g') <(n')^{n'-2}g(n')=t(\overline{G_0(p,q)})$.  

To prove our claim, let us consider the function $w(x)=g(x)/h(x,g')$, and its corresponding logarithm i.e.,
\begin{align*}
w(x) &= \prod_{i=1}^{n}\left(1-\frac{\lambda_i}{x}\right)\left(1-\frac{d+1}{x} \right)^{-sd/(d+1)}\left(1-\frac{d}{x} \right)^{-r(d-1)/d}e^{\sum_{k=1}^c\frac{g_k(G_0)}{kx^k}+\frac{g'}{(c+1)x^{c+1}}} \notag\\
 \log(w(x)) &= \sum_{i=1}^{n}\log\left(1-\frac{\lambda_i}{x}\right)-\frac{sd}{d+1}\log\left(1-\frac{d+1}{x}\right)-\frac{r(d-1)}{d}\log\left(1-\frac{d}{x}\right)+\sum_{k=1}^{c}\frac{g_k(G_0)}{kx^k}+\frac{g'}{(c+1)x^{p+1}}.   
\end{align*}
Now, when $x>2d$ we can take the derivative of the function $\log(w(x))$ with respect to $x$ and
\begin{align*}
\log(w(x))' &= \sum_{i=1}^{n}\frac{\lambda_i}{x^2(1-\lambda_i/x)}-\frac{sd}{x^2}\frac{1}{1-\frac{d+1}{x}}-\frac{r(d-1)}{x^2}\frac{1}{1-\frac{d}{x}}-\sum_{i=1}^{c}\frac{g_i(G_0)}{x^{i+1}}-\frac{g'}{x^{c+2}}\\
&= \frac{1}{x^2} \left(\sum_{j=0}^{\infty}\frac{(\sum_{i=1}^{n}\lambda_i^{j+1})-sd(d+1)^j-r(d-1)d^j}{x^j}   -\sum_{j=1}^{c}\frac{g_j(G_0)}{x^{j-1}}-\frac{g'}{x^c}\right)\\
&=\frac{1}{x^2} \left(\sum_{j=0}^{\infty}\frac{\ell_{j+1}(G_0)-\sum_{i=1}^{n}d_i(G_0)(d_i(G_0)+1)^{j}}{x^j} -\sum_{j=0}^{c-1}\frac{g_{j+1}(G_0)}{x^{j}}-\frac{g'}{x^c}\right)\\
&= \sum_{j=0}^{\infty}\frac{g_{j+1}(G_0)}{x^{j+2}}- \sum_{j=0}^{c-1}\frac{g_{j+1}(G_0)}{x^{j+2}}-\frac{g'}{x^{c+2}}=\frac{g_{c+1}(G_0)-g'}{x^{c+2}}+\sum_{k=c+3}^{\infty}\frac{g_{k-1}(G_0)}{x^k}\\
&\leq  \frac{g_{c+1}(G_0)-g'}{x^{c+2}}+\sum_{k=c+3}^{\infty}\frac{n(G_0)(2d)^{k-1}}{x^k} = \frac{g_{c+1}(G_0)-g'}{x^{c+2}} + \frac{n(G_0)}{2d}\frac{(2d)^{c+3}}{x^{c+3}}\frac{1}{1-(2dx^{-1})}\\ 
&\leq \frac{1}{x^{c+2}}\left(\frac{n(G_0)(2d)^{c+2}}{x-2d}-1\right),
\end{align*}
where the first inequality uses Lemma~\ref{lemma:boundgap} and the second inequality uses 
that $g'-g_{c+1}(G_0)\geq 1$ since both $g'$ and $g_{c+1}(G_0)$ are integers and 
$g'>g_{c+1}(G_0)$. Observe that  
$\log(w(x))'$ is negative when $x\geq 2d+n(G_0)(2d)^{c+2}$. As $w(x)$ approaches $1$ when $x$ tends to infinity we know that $w(x)>1$ when $x\geq 2d+n(G_0)(2d)^{c+2}$, or equivalently, the inequality $h(x,g')<g(x)$ holds when $x\geq 2d+n(G_0)(2d)^{c+2}$. The claim was proved, and the theorem follows.
\end{proof}

On the one hand, Theorem~\ref{theorem:PetingiRodriguez} basically shows that the complement of each $t$-optimal graph must be $\nu$-min when the number of  vertices is sufficiently large. 
On the other hand, Theorem~\ref{theorem:generalize} shows that the complement of such $t$-optimal graphs must be not only $\nu$-min, 
but also minimize the Laplacian sequence in the lexicographic order. The following concept is analogous to that of trace-minimal graphs and its motivation is Theorem~\ref{theorem:generalize}.
\begin{definition}
A graph $G$ in $\mathcal{R}_{d}(n)$ is \emph{$\mathcal{L}$-trace-minimal} if for each $H$ in $\mathcal{R}_{d}(n)$, $\ell_i(G) \preceq \ell_i(H)$. 
\end{definition}

For each graph $G_0$ in $\mathcal{R}_d(n)$ we let $G_0(p)=G_0\cup pK_{d+1}$. The following result is a corollary of Theorem~\ref{theorem:generalize}. 
\begin{corollary}\label{corollary:t-unique}
Let $G_0$ be in $\mathcal{R}_d(n)$. Assume that $G_0(p)$ is the only $\mathcal{L}$-trace-minimal graph in $\mathcal{R}(G_0(p))$ for each nonnegative integer $p$. 
Then, there exists $n_0$ such that $\overline{G_0(p)}$ is the only $t$-optimal graph in $\mathcal{S}(\overline{G_0(p)})$ when $n(G_0(p))\geq n_0$.  
\end{corollary}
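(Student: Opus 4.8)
The plan is to reduce Corollary~\ref{corollary:t-unique} to Theorem~\ref{theorem:generalize} applied with $q=0$, the only real work being to produce a \emph{single} value of $c$ that serves every $p$. First I would record the ambient simplifications. Write $n'=n(G_0(p))=n+p(d+1)$ and $m=n'd/2$. Since $G_0(p)$ is $d$-regular, any graph in $\mathcal{A}_{n',m}$ has degree sum $n'd$ with all degrees in a window of width one, hence is exactly $d$-regular; thus $\mathcal{R}(G_0(p))=\mathcal{A}_{n',m}=\mathcal{S}(G_0(p))^{(2)}$. On $\mathcal{R}_d(n')$ the moments $\ell_1,\ell_2$ and each $\sum_i d_i(d_i+1)^{k-1}$ are constant, so lexicographic minimisation of $(\ell_k)$ coincides with that of the gap sequence $(g_3,g_4,\dots)$; consequently $G_0(p)$, being the unique $\mathcal{L}$-trace-minimal graph, minimises each $\ell_k$ successively and hence lies in $\mathcal{S}(G_0(p))^{(k)}$ for every $k$. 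The goal therefore becomes: find $c^\ast$, independent of $p$, with $\mathcal{S}(G_0(p))^{(c^\ast+1)}=\{G_0(p)\}$.

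The crux is to confine the competitors. Using $g_3=2\nu$ and the additivity of $g_k$ over disjoint unions (as in Lemma~\ref{lemma:gmh}\ref{gmh3}), I would first prove the elementary but decisive fact that \emph{every connected $d$-regular graph $C$ other than $K_{d+1}$ satisfies $\nu(C)\ge n(C)$}. Indeed, $\nu(C)=\sum_{v}\bigl(\binom{d}{2}-e(N(v))\bigr)$, where $e(N(v))$ counts the edges inside the neighbourhood of $v$; a summand vanishes exactly when $N(v)$ is a clique, in which case $\{v\}\cup N(v)$ is a $K_{d+1}$-component, impossible for a connected $C\ne K_{d+1}$, so every summand is at least $1$. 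Now take any $H\in\mathcal{R}_d(n')$ tying $G_0(p)$ at level $3$, i.e.\ $g_3(H)=g_3(G_0)$. Since $H$ is $d$-regular its complete components are copies of $K_{d+1}$ (each contributing $0$ to $\nu$), so writing $\widetilde H$ for the union of the non-complete components we get $n(\widetilde H)\le\nu(\widetilde H)=\nu(H)=\nu(G_0)=:N^\ast$. Hence, as $H$ and $p$ vary, $\widetilde H$ ranges over a finite, $p$-independent family $\mathcal{F}$ of $d$-regular graphs without complete components and with at most $N^\ast$ vertices; moreover $H=G_0(p)$ precisely when $\widetilde H=\widetilde{G_0}$ (the non-complete part of $G_0$), because the $K_{d+1}$-part is then forced by the vertex count.

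With competitors confined to $\mathcal{F}$, the uniqueness hypothesis supplies the missing uniformity. For each $\widetilde H\in\mathcal{F}\setminus\{\widetilde{G_0}\}$, pick a competitor $H=\widetilde H\cup jK_{d+1}$ of some $G_0(p)$ realising it (so $H$ and $G_0(p)$ share the vertex count $n'$ by construction). Since $H\ne G_0(p)$ and $G_0(p)$ is the \emph{unique} $\mathcal{L}$-trace-minimal graph, we have $(\ell_k(G_0(p)))\prec(\ell_k(H))$ strictly, which in gap terms means there is a smallest index $k_{\widetilde H}$ with $g_{k_{\widetilde H}}(\widetilde H)>g_{k_{\widetilde H}}(\widetilde{G_0})$; this index depends on $\widetilde H$ alone, since gaps ignore $K_{d+1}$-components. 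Setting $c^\ast+1=\max\{3,\ \max_{\widetilde H\in\mathcal{F}\setminus\{\widetilde{G_0}\}}k_{\widetilde H}\}$ — a finite maximum over a finite set — every $H\ne G_0(p)$ either already fails to match $g_3$, or, matching $g_3$, strictly exceeds $\widetilde{G_0}$ at some index $\le c^\ast+1$; in either case $H\notin\mathcal{S}(G_0(p))^{(c^\ast+1)}$. Hence $\mathcal{S}(G_0(p))^{(c^\ast+1)}=\{G_0(p)\}$ for all $p$.

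Finally I would invoke Theorem~\ref{theorem:generalize} with $c=c^\ast$ and $q=0$: its hypothesis $G_0(p)\in\mathcal{S}(G_0(p))^{(c^\ast+1)}$ holds for every $p$, so any $H\in\mathcal{S}(G_0(p))$ with $H\ne G_0(p)$ and at least $2d+n(G_0)(2d)^{c^\ast+2}$ vertices satisfies $t(\overline H)<t(\overline{G_0(p)})$. Taking $n_0=2d+n(G_0)(2d)^{c^\ast+2}$ then forces $\overline{G_0(p)}$ to be the unique $t$-optimal graph in $\mathcal{S}(\overline{G_0(p)})$ whenever $n(G_0(p))\ge n_0$, as claimed. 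The one genuinely delicate point is the extraction of a $p$-independent $c^\ast$; everything hinges on the inequality $\nu(C)\ge n(C)$, which collapses the infinite family of potential competitors down to the fixed finite set $\mathcal{F}$ and thereby lets the uniqueness hypothesis act uniformly in $p$.
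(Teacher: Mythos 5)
Your proposal is correct, and it is worth noting that the paper itself gives no argument for this corollary at all: it simply asserts that the statement ``is a corollary of Theorem~\ref{theorem:generalize}.'' Your derivation follows the intended route (apply Theorem~\ref{theorem:generalize} with $q=0$), but you correctly identify and resolve the one point the paper leaves completely implicit, namely that the constant $c$ in Theorem~\ref{theorem:generalize} --- and hence the threshold $n_0=2d+n(G_0)(2d)^{c+2}$ --- must be chosen independently of $p$. Your confinement argument does exactly this: the inequality $\nu(C)\geq n(C)$ for every connected $d$-regular component $C\not\cong K_{d+1}$ forces any competitor tying $G_0(p)$ at level $3$ to have its non-complete part drawn from a fixed finite family $\mathcal{F}$, after which the uniqueness hypothesis yields a first strict gap index $k_{\widetilde H}$ depending only on $\widetilde H$, and $c^\ast=\max k_{\widetilde H}-1$ works uniformly. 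Two small points to tighten: (i) define $k_{\widetilde H}$ as the \emph{first index where the gap sequences differ} and then observe that $\mathcal{L}$-trace-minimality forces the inequality at that index to go the right way (as phrased, ``smallest index with $g_k(\widetilde H)>g_k(\widetilde G_0)$'' could in principle sit after an earlier reversed inequality, which you should explicitly rule out); and (ii) Theorem~\ref{theorem:generalize} is stated with the hypothesis quantified over all pairs $(p,q)$, whereas you verify it only for $q=0$ --- this is harmless because the proof of that theorem uses the hypothesis only for the pair under consideration, but you should say so when invoking it. Neither point is a gap; your argument supplies a complete proof of a step the paper omits.
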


We close this section showing a strong relationship between $\mathcal{L}$-trace-minimal and trace-minimal graphs. 
In fact, we will prove that a graph $G$ in $\mathcal{R}_d(n)$ is trace-minimal if and only if $\overline{G}$ is $\mathcal{L}$-trace-minimal in $\mathcal{R}_{n-1-d}(n)$. As a consequence, all the mathematical properties of trace-minimal graphs will be useful to find new $t$-optimal regular graphs. First, Lemma~\ref{lemma:basics} presents a basic result on linear algebra. 
\begin{lemma}\label{lemma:basics}
For each $G$ in $\mathcal{R}_d(n)$ and positive integers $i$ and $j$ it holds that $tr(L(G)^i((d+1)I_n-J_n))^j)=(d+1)^j\ell_i(G)$.
\end{lemma}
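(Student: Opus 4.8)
The plan is to reduce the whole statement to a single matrix identity and only take traces at the very end. The one nontrivial ingredient is that the all-ones vector lies in the kernel of every Laplacian: since $L(G)\mathbf{1}_n=\mathbf{0}$ (the row sums of a Laplacian vanish) and $J_n=\mathbf{1}_n\mathbf{1}_n^{\top}$, multiplying out gives $L(G)J_n=L(G)\mathbf{1}_n\mathbf{1}_n^{\top}=\mathbf{0}$. This is the only fact I really need; notice it does not even use $d$-regularity, which in the statement merely fixes the value of $d$. The effect is that any factor of $L(G)$ on the left annihilates the $J_n$-part of $(d+1)I_n-J_n$, so I expect the $J_n$ term to simply disappear.

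Concretely, writing $M=(d+1)I_n-J_n$, I would first establish the base identity $L(G)^i M=(d+1)L(G)^i$ for every positive integer $i$. Indeed $L(G)^i M=(d+1)L(G)^i-L(G)^i J_n$, and $L(G)^i J_n=L(G)^{i-1}\bigl(L(G)J_n\bigr)=\mathbf{0}$ because $i\geq 1$ guarantees at least one factor of $L(G)$ is available to apply $L(G)J_n=\mathbf{0}$. I would then run an induction on $j$: assuming $L(G)^i M^j=(d+1)^j L(G)^i$, I factor $L(G)^i M^{j+1}=(L(G)^i M^j)M$, substitute the inductive hypothesis, and reapply the base identity to obtain $(d+1)^{j+1}L(G)^i$. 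This yields the clean matrix identity $L(G)^i\bigl((d+1)I_n-J_n\bigr)^j=(d+1)^j L(G)^i$ valid for all positive integers $i$ and $j$.

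Taking the trace of both sides and recalling $\ell_i(G)=tr(L(G)^i)$ then gives $tr\!\left(L(G)^i\bigl((d+1)I_n-J_n\bigr)^j\right)=(d+1)^j\ell_i(G)$ at once. There is no genuine obstacle here, which is fitting for a \emph{basic} lemma; the only point deserving care is the hypothesis that $i$ is a \emph{positive} integer, since one factor of $L(G)$ is exactly what is needed to kill $J_n$, and the identity would fail for $i=0$. An alternative route would expand $\bigl((d+1)I_n-J_n\bigr)^j$ directly using $J_n^k=n^{k-1}J_n$, so that it collapses to $(d+1)^jI_n+c_jJ_n$ for a scalar $c_j$, and then discard the $J_n$ term via $L(G)^iJ_n=\mathbf{0}$; this works equally well but trades the short induction for a binomial bookkeeping step, so I would present the induction instead.
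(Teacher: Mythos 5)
Your proof is correct, but it takes a genuinely different route from the paper's. The paper expands $\bigl((d+1)I_n-J_n\bigr)^j$ by the binomial theorem (using $J_n^{k}=n^{k-1}J_n$) and then shows the cross terms vanish by proving $tr(L(G)^iJ_n)=0$; that last step is done by writing $L(G)=dI_n-A(G)$, counting walks in a $d$-regular graph to get $tr(A(G)^kJ_n)=nd^k$, and observing that the resulting alternating binomial sum is zero. You instead use the kernel identity $L(G)\mathbf{1}_n=\mathbf{0}$ together with $J_n=\mathbf{1}_n\mathbf{1}_n^{\top}$ to get the \emph{matrix} identity $L(G)^iJ_n=\mathbf{0}$, establish $L(G)^iM^j=(d+1)^jL(G)^i$ by a short induction on $j$, and only then take traces. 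Your argument is more elementary and strictly stronger: it kills the $J_n$ contribution at the matrix level rather than only in trace, it avoids the walk-counting computation entirely, and — as you correctly note — it shows the identity does not actually depend on $G$ being regular (regularity only matters elsewhere in the paper, where $A(\overline{G})=L(G)-\bigl((d+1)I_n-J_n\bigr)$ requires $G\in\mathcal{R}_d(n)$). Your caveat about $i$ being positive is also well placed, since for $i=0$ the left-hand side is $tr(M^j)\neq (d+1)^jn$ in general.
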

\begin{proof}
Let $i$ and $j$ be any positive integers. Observe that $J_n^2=nJ_n$. By induction it follows that $J_n^j=n^{j-1}J_n$. If we prove that $tr(L(G)^iJ_n)=0$ then the lemma follows since, by the linearity of the trace operator, 
\begin{align*}
tr(L(G)^i((d+1)I_n-J_n)^j) &= 
tr(L(G)^i \sum_{k=0}^{j}\binom{j}{k}((d+1)I_n)^k(-J_n)^{j-k})\\ 
&= \sum_{k=0}^{j}(-1)^{j-k}\binom{j}{k}(d+1)^ktr(L(G)^iJ_n^{j-k}) 
= (d+1)^j\ell_i(G).
\end{align*}
For each nonnegative integer $k$,  $tr(A(G)^kJ_n)$ equals the sum of all entries of $A(G)^k$, which is precisely the number of 
walks of length $k$ in $G$.  
As $G$ is a $d$-regular graph on $n$ vertices, $tr(A(G)^kJ_n)=nd^k$. Finally, as $L(G)=dI_n-A(G)$,
\begin{align*}
tr(L(G)^iJ_n) &= tr((dI_n-A(G))^iJ_n)=\sum_{k=0}^{i}\binom{i}{k}d^{i-k}(-1)^{k}tr(A(G)^{k}J_n)\\
&=\sum_{k=0}^{i}\binom{i}{k}d^{i-k}(-1)^{k}nd^{k}=nd^i\sum_{k=0}^{i}\binom{i}{k}(-1)^{k}=0 \qedhere
\end{align*}
\end{proof}

\begin{lemma}\label{lemma:lminimal}
A graph $G$ is $\mathcal{L}$-trace-minimal in $\mathcal{R}_d(n)$ if and only if $\overline{G}$ is trace-minimal in $\mathcal{R}_{n-1-d}(n)$.    
\end{lemma}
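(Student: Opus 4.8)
The plan is to show directly that complementation converts the lexicographic comparison of Laplacian sequences into the comparison of adjacency sequences. Since $G\mapsto\overline{G}$ is a bijection from $\mathcal{R}_d(n)$ onto $\mathcal{R}_{n-1-d}(n)$ (complementation carries $d$-regular graphs to $(n-1-d)$-regular ones and is an involution), it suffices to prove that for any $G,H\in\mathcal{R}_d(n)$ one has $\ell_i(G)\preceq\ell_i(H)$ if and only if $a_i(\overline{G})\preceq a_i(\overline{H})$; the equivalence of being minimal in the two orders then follows at once. The starting point is the identity $A(\overline{G})=J_n-I_n-A(G)=L(G)-\bigl((d+1)I_n-J_n\bigr)$, obtained from $A(G)=dI_n-L(G)$. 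Writing $M=(d+1)I_n-J_n$, I would first note that $A(G)$ commutes with $J_n$ (because $d$-regularity gives $A(G)J_n=J_nA(G)=dJ_n$), hence $L(G)$ commutes with $M$, so the binomial theorem legitimately applies to $\bigl(L(G)-M\bigr)^k$.

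Next I would expand and take traces:
\[
a_k(\overline{G})=tr\bigl((L(G)-M)^k\bigr)=\sum_{j=0}^{k}\binom{k}{j}(-1)^{j}\,tr\bigl(L(G)^{k-j}M^{j}\bigr).
\]
For $1\le j\le k-1$, Lemma~\ref{lemma:basics} gives $tr(L(G)^{k-j}M^{j})=(d+1)^{j}\ell_{k-j}(G)$; the term $j=0$ contributes $\ell_k(G)$, and the term $j=k$ contributes $tr(M^{k})$, a quantity depending only on $n,d,k$ (since $M$ has eigenvalue $d+1-n$ once and $d+1$ with multiplicity $n-1$, so $tr(M^k)=(d+1-n)^k+(n-1)(d+1)^k$). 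Collecting terms yields $a_k(\overline{G})=\sum_{i=1}^{k}\binom{k}{i}(-(d+1))^{k-i}\ell_i(G)+C_k$, where $C_k$ is independent of the particular graph in $\mathcal{R}_d(n)$. Thus the map sending $(\ell_i(G))_i$ to $(a_i(\overline{G}))_i$ is affine and lower triangular, with the coefficient of $\ell_k(G)$ in $a_k(\overline{G})$ equal to $1$.

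The lexicographic statement then follows from triangularity. For fixed $G,H$ set $\delta_i=\ell_i(G)-\ell_i(H)$ and $\epsilon_i=a_i(\overline{G})-a_i(\overline{H})$; the constants $C_k$ cancel, leaving $\epsilon_k=\sum_{i=1}^{k}\binom{k}{i}(-(d+1))^{k-i}\delta_i$ with unit diagonal. Hence if $j$ is the least index with $\delta_j\neq0$ then $\epsilon_i=0$ for $i<j$ and $\epsilon_j=\delta_j$, so the two difference sequences either vanish identically together or have their first nonzero entry at the same index and of the same sign. Recalling that (as one checks from the definition) $b_i\preceq c_i$ holds exactly when the sequences are equal or their first discrepancy makes $b$ the smaller one, this gives $\ell_i(G)\preceq\ell_i(H)\iff a_i(\overline{G})\preceq a_i(\overline{H})$, which completes the argument via the bijection above.

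The main obstacle is the bookkeeping at the two ends of the binomial sum: Lemma~\ref{lemma:basics} is stated only for positive $i,j$, so the boundary terms $j=0$ and $j=k$ must be handled separately, and one must verify that the $j=k$ term is genuinely graph-independent so that it cancels in the difference $\epsilon_k$. The remaining conceptual point — that an invertible lower-triangular affine change of coordinates with positive diagonal preserves the lexicographic order — is routine once the first-nonzero-index observation is in place.
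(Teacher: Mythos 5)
Your proof is correct and follows essentially the same route as the paper's: both use the identity $A(\overline{G})=L(G)-((d+1)I_n-J_n)$, expand by the binomial theorem, invoke Lemma~\ref{lemma:basics} to reduce the traces to the Laplacian sequence, and observe that the resulting triangular relation with unit diagonal preserves the first index of discrepancy. Your treatment is in fact slightly more careful than the paper's (you justify the binomial expansion via commutativity, isolate the graph-independent boundary terms $j=0$ and $j=k$ explicitly, and get both directions at once from the affine triangular change of coordinates), but the underlying argument is the same.
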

\begin{proof}
We will prove the direct since a proof of the converse is analogous. Let $G$ be any $\mathcal{L}$-trace-minimal graph in $\mathcal{R}_d(n)$. As $G$ is $\mathcal{L}$-trace-minimal, for each $H$ in $\mathcal{R}_d(n)$ one of the two conditions holds:
\begin{enumerate}[label=(\roman*)]
\item\label{c1} For each positive integer $i$ it holds that $\ell_i(G)=\ell_i(H)$, or 
\item\label{c2} There exists a positive integer $j$ such that $\ell_i(G)=\ell_i(H)$ for each $i\in \{1,2,\ldots,j-1\}$ 
but $\ell_j(G)<\ell_j(H)$.
\end{enumerate}
As $\overline{H}$ is any graph in $\mathcal{R}_{n-1-d}(n)$, it is enough to prove that $a_i(\overline{G})\preceq a_i(\overline{H})$. By Lemma~\ref{lemma:basics}, 
\begin{align*}
a_i(\overline{G})=tr(A(\overline{G})^i) = tr((L(G)-((d+1)I_n-J_n))^i) =
\sum_{k=0}^{i}(-1)^{i-k}\binom{i}{k}(d+1)^{i-k}\ell_k(G);\\
a_i(\overline{H})=tr(A(\overline{H})^i) = tr(L(H)-((d+1)I_n-J_n))^i) = 
\sum_{k=0}^{i}(-1)^{i-k}\binom{i}{k}(d+1)^{i-k}\ell_k(H).
\end{align*}
If condition~\ref{c1} holds then $a_i(\overline{G})=a_i(\overline{H})$ for each positive integer $i$ hence $a_i(\overline{G}) \preceq a_i(\overline{H})$. Finally, if condition~\ref{c2} holds then $a_i(\overline{G})=a_i(\overline{H})$ when $i\in \{1,2,\ldots,j-1\}$. However, each term in the summations for $a_j(\overline{G})$ and $a_j(\overline{H})$ is equal except for the last term which are respectively $\ell_j(G)$ and $\ell_j(H)$. As $\ell_j(G)<\ell_j(H)$, it follows that $a_j(\overline{G})<a_j(\overline{H})$.
\end{proof}

Theorem~\ref{theorem:summary} has a simple statement that summarizes our findings. 
\begin{theorem}\label{theorem:summary}
Let $G_0$ be in $\mathcal{R}_d(n)$. If $\overline{G_0(p)}$ is the only 
trace-minimal graph for each positive integer $p$, then there exists a positive integer $n_0$ such that $\overline{G_0(p)}$ is the only $t$-optimal whenever $n(G_0(p))\geq n_0$. If, in addition, each graph $H$ in $\mathcal{S}(G_0(p))-\{G_0(p)\}$ is not in $\mathcal{S}(G_0(p))^{c+1}$, then we can choose $n_0$ as $2d+n(G_0)(2d)^{c+2}$.
\end{theorem}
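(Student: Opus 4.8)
The plan is to derive both assertions by assembling results already in place, treating the statement as a synthesis rather than a fresh computation. I would first record that $G_0(p)=G_0\cup pK_{d+1}$ is $d$-regular on $n'=n+p(d+1)$ vertices, so $G_0(p)\in\mathcal{R}_d(n')$ and $\overline{G_0(p)}\in\mathcal{R}_{n'-1-d}(n')$. The key step for the first assertion is to convert the trace-minimality hypothesis into the $\mathcal{L}$-trace-minimality hypothesis demanded by Corollary~\ref{corollary:t-unique}. Since complementation is an involution carrying $\mathcal{R}_d(n')$ bijectively onto $\mathcal{R}_{n'-1-d}(n')$, Lemma~\ref{lemma:lminimal} shows that the $\mathcal{L}$-trace-minimal graphs of $\mathcal{R}_d(n')$ are exactly the complements of the trace-minimal graphs of $\mathcal{R}_{n'-1-d}(n')$; in particular uniqueness transfers, so the assumption that $\overline{G_0(p)}$ is the only trace-minimal graph is equivalent to the assumption that $G_0(p)$ is the only $\mathcal{L}$-trace-minimal graph in $\mathcal{R}(G_0(p))$. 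This is precisely the hypothesis of Corollary~\ref{corollary:t-unique}, and invoking it yields a positive integer $n_0$ for which $\overline{G_0(p)}$ is the unique $t$-optimal graph once $n(G_0(p))\geq n_0$.

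For the second assertion I would instead appeal directly to Theorem~\ref{theorem:generalize} in order to pin down the explicit bound. The added hypothesis states that no member of $\mathcal{S}(G_0(p))\setminus\{G_0(p)\}$ lies in $\mathcal{S}(G_0(p))^{(c+1)}$; since $\mathcal{S}(G_0(p))$ is finite, the descending chain $\mathcal{S}(G_0(p))^{(1)}\supseteq\mathcal{S}(G_0(p))^{(2)}\supseteq\cdots$ stays nonempty at every level, which forces $\mathcal{S}(G_0(p))^{(c+1)}=\{G_0(p)\}$ and, in particular, $G_0(p)=G_0(p,0)\in\mathcal{S}(G_0(p))^{(c+1)}$. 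Reading $G_0$ as an almost-regular graph all of whose vertices have degree $d$ (so the count of degree-$(d-1)$ vertices is zero and $q=0$), this is exactly the hypothesis of Theorem~\ref{theorem:generalize}. That theorem then gives, for every $H\in\mathcal{S}(G_0(p))$ with $H\neq G_0(p)$ and $n(H)=n(G_0(p))\geq 2d+n(G_0)(2d)^{c+2}$, the strict inequality $t(\overline{H})<t(\overline{G_0(p)})$. Because $H\mapsto\overline{H}$ is a bijection from $\mathcal{S}(G_0(p))$ onto $\mathcal{S}(\overline{G_0(p)})$, this exhibits $\overline{G_0(p)}$ as the unique $t$-optimal graph of its class and licenses the choice $n_0=2d+n(G_0)(2d)^{c+2}$.

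I expect the only delicate point to be the careful bookkeeping of the ambient classes: trace-minimality lives in $\mathcal{R}_{n'-1-d}(n')$, $\mathcal{L}$-trace-minimality in $\mathcal{R}_d(n')$, and the finite-level minimizers $\mathcal{S}(G_0(p))^{(c+1)}$ in the full class $\mathcal{S}_{n',m'}$, so I must make sure that \emph{uniqueness}, and not merely existence, survives each transfer---through complementation in the first part, and through the nonemptiness of the descending chain in the second. A secondary subtlety is verifying that a genuinely $d$-regular $G_0$ validly instantiates the almost-regular template of Theorem~\ref{theorem:generalize} with the degree-$(d-1)$ vertex count equal to zero and $q=0$; once this is checked, both cited results apply verbatim and no further estimation is required.
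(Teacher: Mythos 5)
Your proposal is correct and follows exactly the route the paper intends: the paper offers no separate proof of Theorem~\ref{theorem:summary}, presenting it as a summary obtained by combining Lemma~\ref{lemma:lminimal} (to transfer unique trace-minimality of $\overline{G_0(p)}$ into unique $\mathcal{L}$-trace-minimality of $G_0(p)$) with Corollary~\ref{corollary:t-unique} for the first assertion and Theorem~\ref{theorem:generalize} (with $q=0$ and no degree-$(d-1)$ vertices) for the explicit bound $n_0=2d+n(G_0)(2d)^{c+2}$. Your observation that the hypothesis forces $\mathcal{S}(G_0(p))^{(c+1)}=\{G_0(p)\}$ via nonemptiness of the descending chain is the right way to make the second application legitimate.
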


\section{Construction of new regular t-optimal graphs}\label{section:toptimal}
A generous list of trace-minimal graphs can be found in~\cite{Abrego}. Among them, we can find all trace-minimal graphs in each nonempty class $\mathcal{R}_{n-1-\delta}(n)$ where $\delta\in \{0,1,\ldots,5\}$. 
When $\delta\in \{0,1,2,3\}$, most trace-minimal graphs are $t$-optimal~\cite{Petingi}. Lemma~\ref{lemma:quartic} was proved by \'Abrego et al.~\cite{Abrego}. Its proof is an application of Lemma~\ref{lemma:test} with $\delta=4$.
\begin{lemma}[\'Abrego~\cite{Abrego}]\label{lemma:quartic}
Let $n$ be any integer such that $n\geq 5$, and let $q$ and $\rho$ be the only integers such that $\rho\in \{0,1,\ldots,4\}$ and $n=5q+\rho$. Then there exists precisely one trace-minimal graph $H_n$ in $\mathcal{R}_{n-5}(n)$. Such trace-minimal graph $H_n$ is $H_{5+\rho} \wedge \overline{K_5}^{(q-1)}$, 
where $H_5=\overline{K_5}$, $H_6=3K_2$, $H_7=C_7$, and $H_8$ and $H_9$ are depicted in Figure~\ref{figure:WG}.
\end{lemma}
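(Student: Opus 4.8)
\emph{Approach.} The plan is to induct on $q$, using Lemma~\ref{lemma:test} with $\delta=4$ to strip off one copy of $\overline{K_5}$ at each step, so that the whole statement reduces to the five base cases $q=1$, i.e.\ $n\in\{5,6,7,8,9\}$. Observe first that $H_n=H_{5+\rho}\wedge\overline{K_5}^{(q-1)}$ satisfies the recursion $H_n=H_{n-5}\wedge\overline{K_5}$, since $n-5=5(q-1)+\rho$ leaves the same remainder $\rho$; this is exactly the shape produced by Lemma~\ref{lemma:test}. Existence of a trace-minimal graph is automatic, because $\mathcal{R}_{n-5}(n)$ is finite and nonempty and trace-minimality is a lexicographic minimum, so the real content is uniqueness together with the identification of the minimizer.

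\emph{Inductive step ($q\ge 2$).} With $\delta=4$, Lemma~\ref{lemma:test} asserts that a trace-minimal $G\in\mathcal{R}_{n-5}(n)$ either has the form $G=H\wedge\overline{K_5}$ for some trace-minimal $H\in\mathcal{R}_{n-10}(n-5)$, or else $n\le C_\rho$, where $C_\rho=\frac{\rho}{4}(25-\rho^2)+\frac32\tau_\rho(5+\rho)$ depends only on $\rho$. I would dispatch the second alternative by computing $C_\rho$: the witnesses $\overline{K_5}$, $3K_2$, $C_7$ and the cube $Q_3$ give $\tau_0(5)=\tau_1(6)=\tau_2(7)=\tau_3(8)=0$, while $\tau_4(9)=2$ (treated below), so $(C_0,\dots,C_4)=(0,6,\tfrac{21}{2},12,12)$. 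Since $q\ge 2$ forces $n=5q+\rho\ge 10+\rho>C_\rho$ in every case, the second alternative is impossible and the recursive form is forced. By the induction hypothesis $H$ equals the unique trace-minimal graph $H_{n-5}=H_{5+\rho}\wedge\overline{K_5}^{(q-2)}$, whence $G=H_{n-5}\wedge\overline{K_5}=H_{5+\rho}\wedge\overline{K_5}^{(q-1)}=H_n$. As every trace-minimal graph is thereby forced to equal $H_n$, there is exactly one.

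\emph{Base cases ($q=1$).} Here $\mathcal{R}_0(5)=\{\overline{K_5}\}$ and $\mathcal{R}_1(6)=\{3K_2\}$ are singletons, so $H_5,H_6$ are immediate. For $n=7$ the class $\mathcal{R}_2(7)=\{C_7,\,C_3\cup C_4\}$ has $C_7$ as its unique graph of maximum girth, so the girth corollary of \'Abrego et al.~\cite{Abrego} yields that $C_7=H_7$ is the unique trace-minimal graph. For $n=8,9$ the maximum-girth graph is no longer unique, so I would instead invoke the cycle-counting theorem of~\cite{Abrego}, comparing $cyc(\cdot,3)$ and then $cyc(\cdot,4)$. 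For $n=8$ the triangle-free cubic graphs are exactly the cube $Q_3$ and the Wagner graph $V_8$; using the identity $tr(A(G)^4)=8\,cyc(G,4)+2\,n(G)\,d^2-n(G)\,d$ valid for every $d$-regular $G$, one gets $cyc(V_8,4)=4<6=cyc(Q_3,4)$, so $V_8=H_8$ is the unique trace-minimal graph. For $n=9$, after establishing $\tau_4(9)=2$ I would enumerate the $4$-regular graphs on $9$ vertices attaining two triangles and select the one minimizing $cyc(\cdot,4)$; the cycle-counting theorem certifies that this graph, $H_9$, is trace-minimal, and strictness of the comparisons gives uniqueness.

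\emph{The value $\tau_4(9)=2$ and the main obstacle.} The single nonroutine input is $\tau_4(9)=2$. The upper bound follows from an explicit graph. For the lower bound, suppose a $4$-regular $G$ on $9$ vertices had at most one triangle; then at least six vertices lie in no triangle, so some vertex $v$ has an independent neighbourhood. Writing $N(v)$ for its four neighbours and $M$ for the remaining four vertices, each vertex of $N(v)$ sends three edges into $M$, which forces exactly two edges inside $M$; a short case check on how these two edges meet the common neighbourhoods in $N(v)$ produces at least two triangles, contradicting the assumption. I expect the genuine difficulty to lie precisely in the two largest base cases $n=8$ and $n=9$ --- pinning down $\tau_4(9)$ and carrying out the finite, but authentically case-based, comparison of cycle sequences that isolates $H_8$ and $H_9$ --- whereas the inductive skeleton and the verification $C_\rho<n$ for $q\ge2$ become routine once these facts are in hand.
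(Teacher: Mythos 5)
First, a point of reference: the paper does not prove Lemma~\ref{lemma:quartic} itself --- it imports the result from~\cite{Abrego} and records only that ``its proof is an application of Lemma~\ref{lemma:test} with $\delta=4$.'' Your proposal follows exactly that route, so there is no methodological divergence to compare; the question is only whether your reconstruction is complete. Most of it is. The inductive skeleton is sound: the recursion $H_n=H_{n-5}\wedge\overline{K_5}$ is the shape produced by Lemma~\ref{lemma:test} with $\delta=4$ (reading the paper's $\tau_\rho(d+1+\rho)$ as $\tau_\rho(\delta+1+\rho)$, which is the correct reading of the source); your values $\tau_0(5)=\tau_1(6)=\tau_2(7)=\tau_3(8)=0$ and $(C_0,\dots,C_4)=(0,6,\tfrac{21}{2},12,12)$ check out; $n=5q+\rho\ge 10+\rho$ exceeds $C_\rho$ in all five cases; and your argument for $\tau_4(9)\ge 2$ (a triangle-free vertex has an independent neighbourhood, degree counting forces exactly two edges inside $M$, and the matching/path case split on those two edges forces at least two triangles) is correct. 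The base cases $n\in\{5,6,7\}$ and the $n=8$ computation $cyc(V_8,4)=4<6=cyc(Q_3,4)$ are also right.

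The genuine gap is the base case $n=9$. You never identify $H_9$ or verify that it is the unique trace-minimal graph in $\mathcal{R}_4(9)$; ``I would enumerate the $4$-regular graphs on $9$ vertices attaining two triangles and select the one minimizing $cyc(\cdot,4)$'' is a plan, not a proof. Moreover it is not a priori clear that this minimizer is unique: if two graphs with two triangles tie on $cyc(\cdot,4)$, the cycle-counting theorem lets you continue only to $cyc(\cdot,5)$ (since the maximum girth in $\mathcal{R}_4(9)$ is $g=3$, so $k\le 2g-1=5$), and a further tie would leave that theorem silent, requiring a direct comparison of traces. Since $\tau_4(9)=2$ gives $C_4=12<14$, this case cannot be absorbed into the induction and must be settled by hand; without it the lemma remains unproved for every $n\equiv 4\pmod 5$. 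A smaller unsupported assertion of the same kind is that $Q_3$ and $V_8$ are the \emph{only} triangle-free cubic graphs on $8$ vertices --- true, but it needs at least a sentence (e.g., by inspecting the six cubic graphs on $8$ vertices).
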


\begin{figure}[!ht]
\begin{center}
\scalebox{0.9}{
\begin{tabular}{c}
\begin{tikzpicture}[scale=1]

\coordinate(a) at (0,1);
\coordinate(b) at (0.705,0.705);
\coordinate(c) at (1,0);
\coordinate(d) at (0.705,-0.705);
\coordinate(e) at (0,-1);
\coordinate(f) at (-0.705,-0.705);
\coordinate(g) at (-1,0);
\coordinate(h) at (-0.705,0.705);

\draw (a)--(b);
\draw (b)--(c);
\draw (c)--(d);
\draw (d)--(e);
\draw (e)--(f);
\draw (f)--(g);
\draw (g)--(h);
\draw (h)--(a);
\draw (a)--(e);
\draw (b)--(f);
\draw (c)--(g);
\draw (d)--(h);
\foreach \p in {a,b,c,d,e,f,g,h}
       \fill [black] (\p) circle (3pt);
\end{tikzpicture} \\
$H_8$
\end{tabular}
\begin{tabular}{c}
\begin{tabular}{c}
\begin{tikzpicture}[scale=1]
\coordinate(a) at (0.98480775301,0.17364817766);
\coordinate(b) at (0.64278760968,0.76604444311);
\coordinate(c) at (0,1);
\coordinate(d) at (-0.64278760968,0.76604444311);
\coordinate(e) at (-0.98480775301,0.17364817766);
\coordinate(f) at (-0.86602540378,-0.5);
\coordinate(g) at (-0.34202014332,-0.93969262078);
\coordinate(h) at (0.34202014332,-0.93969262078);
\coordinate(i) at (0.86602540378,-0.5);
\draw (a)--(d);
\draw (a)--(e);
\draw (a)--(f);
\draw (a)--(g);
\draw (b)--(d);
\draw (b)--(e);
\draw (b)--(f);
\draw (b)--(g);
\draw (c)--(f);
\draw (c)--(g);
\draw (c)--(h);
\draw (c)--(i);
\draw (d)--(h);
\draw (d)--(i);
\draw (e)--(h);
\draw (e)--(i);
\draw (f)--(h);
\draw (g)--(i);
\foreach \p in {a,b,c,d,e,f,g,h,i}
       \fill [black] (\p) circle (3pt);
\end{tikzpicture}\\
\end{tabular}\\
$H_9$
\end{tabular}
}
\end{center}
\caption{Graphs $H_8$ and $H_9$.\label{figure:WG}}
\end{figure}
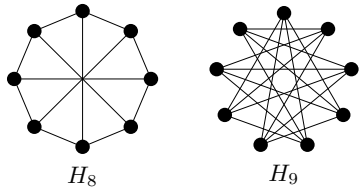

By Lemma~\ref{lemma:quartic}, we are in conditions to apply Theorem~\ref{theorem:summary} choosing $G_0$ as the complement of each of the $5$ graphs in the set $\{H_5,H_6,H_7,H_8,H_9\}$ defined in Lemma~\ref{lemma:quartic}. As a consequence, there exists some positive integer $n_0$ such that each graph $H_n$ defined in Lemma~\ref{lemma:quartic} is the only $t$-optimal for all $n\geq n_0$ thus proving the following result.

\begin{corollary}\label{corollary:example}
There exists a positive integer $n_0$ such that $H_n$ is the only $t$-optimal graph in $\mathcal{R}_{n-5}(n)$ for all $n\geq n_0$.     
\end{corollary}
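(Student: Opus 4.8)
The plan is to recognize the corollary as a direct instance of Theorem~\ref{theorem:summary}, applied separately to each residue class of $n$ modulo $5$. Concretely, for each $\rho \in \{0,1,2,3,4\}$ I would take $G_0 = \overline{H_{5+\rho}}$, where $H_5,\ldots,H_9$ are the five base graphs of Lemma~\ref{lemma:quartic}. Since $H_{5+\rho}$ lies in $\mathcal{R}_{\rho}(5+\rho)$, its complement $G_0$ lies in $\mathcal{R}_4(5+\rho)$; thus the degree appearing in Theorem~\ref{theorem:summary} is $d=4$, the attached cliques are copies of $K_{d+1}=K_5$, and $G_0(p)=G_0\cup pK_5$.

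The crux is to identify $\overline{G_0(p)}$ with the graph $H_n$ of Lemma~\ref{lemma:quartic}. Using the complement--join duality $\overline{A\cup B}=\overline{A}\wedge\overline{B}$, which is immediate from the definition of the join, together with $\overline{pK_5}=\overline{K_5}^{(p)}$, I would compute
\[
\overline{G_0(p)} \;=\; \overline{\,\overline{H_{5+\rho}}\cup pK_5\,} \;=\; H_{5+\rho}\wedge\overline{K_5}^{(p)} \;=\; H_n, \qquad n=5(p+1)+\rho,
\]
the last equality being exactly the description of $H_n$ in Lemma~\ref{lemma:quartic} with $q=p+1$. A short check confirms $n(G_0(p))=(5+\rho)+5p=n$ and that $\overline{G_0(p)}$ is $(n-5)$-regular, so $\overline{G_0(p)}\in\mathcal{R}_{n-5}(n)=\mathcal{R}(\overline{G_0(p)})$.

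With this identification the hypothesis of Theorem~\ref{theorem:summary} comes for free: Lemma~\ref{lemma:quartic} asserts that $H_n$ is the \emph{unique} trace-minimal graph in $\mathcal{R}_{n-5}(n)$, i.e.\ that $\overline{G_0(p)}$ is the only trace-minimal graph in $\mathcal{R}(\overline{G_0(p)})$ for every $p$. Theorem~\ref{theorem:summary} then yields a threshold $n_0^{(\rho)}$ such that $\overline{G_0(p)}=H_n$ is the only $t$-optimal graph in $\mathcal{S}(\overline{G_0(p)})$ once $n\geq n_0^{(\rho)}$. Because $\mathcal{R}_{n-5}(n)\subseteq\mathcal{S}(\overline{G_0(p)})$ and $H_n$ itself lies in $\mathcal{R}_{n-5}(n)$, uniqueness of the $t$-optimum in the larger class forces uniqueness in the smaller one. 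Setting $n_0=\max_{\rho\in\{0,1,2,3,4\}}n_0^{(\rho)}$ and writing each $n\geq n_0$ as $n=5(p+1)+\rho$ with $\rho\in\{0,1,2,3,4\}$ (so $p$ is necessarily large), I conclude that $H_n$ is the only $t$-optimal graph in $\mathcal{R}_{n-5}(n)$, as required.

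I do not expect a genuinely hard step; the entire argument is bookkeeping around a single application of Theorem~\ref{theorem:summary}. The part demanding the most care is the combinatorial identification $\overline{G_0(p)}=H_n$ --- keeping the parameters $\rho$, $p$, $q$, and $n$ aligned and pushing complements correctly through the union/join duality --- together with the routine but necessary observations that Lemma~\ref{lemma:quartic} supplies the uniqueness hypothesis verbatim and that a unique $t$-optimum over all of $\mathcal{S}_{n,m}$ is a fortiori the unique $t$-optimum over the regular subclass.
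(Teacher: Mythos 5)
Your proposal is correct and follows exactly the paper's route: the paper likewise obtains the corollary by applying Theorem~\ref{theorem:summary} with $G_0=\overline{H_{5+\rho}}$ for each $\rho\in\{0,1,2,3,4\}$, with the uniqueness hypothesis supplied verbatim by Lemma~\ref{lemma:quartic}. Your write-up merely makes explicit the bookkeeping (the identification $\overline{G_0(p)}=H_{5+\rho}\wedge\overline{K_5}^{(p)}=H_n$ and the maximum over the five thresholds) that the paper leaves implicit.
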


We remark that Corollary~\ref{corollary:example} is just an example of our methodology. Similar analysis could be conducted to find new $t$-optimal graphs among other classes of trace-minimal graphs. Further research should be carried out to determine the least integer $n_0$ satisfying the conditions of Corollary~\ref{corollary:example}.

\section*{Acknowledgments}
This work is partially supported by City University of New York project entitled \emph{On the problem of characterizing graphs with maximum number of spanning trees}, grant number 66165-00. We acknowledge the anonymous reviewers for their valuable feedback which contributed to the improvement of this manuscript.

\end{document}